\documentclass[12pt]{amsart}
\usepackage[toc,page]{appendix}
\usepackage{amsmath,amssymb,latexsym}
\usepackage[small]{caption}
\usepackage{graphicx,color,mathrsfs,tikz}
\usepackage{subfigure,color}
\usepackage{cite}
\usepackage[colorlinks=true,urlcolor=blue,
citecolor=red,linkcolor=blue,linktocpage,pdfpagelabels,
bookmarksnumbered,bookmarksopen]{hyperref}

\usepackage{soul}
\usepackage[T1]{fontenc}
\usepackage[polish,russian,italian,english]{babel}
\usepackage[utf8]{inputenc}

\usepackage{enumitem}
\usepackage[left=2.65cm,right=2.65cm,top=2.9cm,bottom=2.9cm]{geometry}
\usepackage[hyperpageref]{backref}

\usepackage[colorinlistoftodos,prependcaption]{todonotes}

\numberwithin{equation}{section}
\newtheorem{theorem}{Theorem}[section]

\newtheorem{lemma}[theorem]{Lemma}

\theoremstyle{definition}

\renewcommand{\epsilon}{\eps}
\renewcommand{\i}{{\rm i}}

\newcommand{\B}{{\mathcal B}}

\renewcommand{\H}{{\mathcal H}}

\newcommand{\Co}{{\mathcal C}}
\newcommand{\N}{{\mathbb N}}
\newcommand{\R}{{\mathbb R}}

\newcommand{\eps}{\varepsilon}

\newcommand{\pnorm}[2][]{\if #1'' \left|#2\right|_p \else \left|#2\right|_{#1} \fi}

\renewcommand{\theta}{\vartheta}

\newcommand{\Rn}{{\mathbb R^{n}}}
\newcommand{\eqlab}[1]{\begin{equation}  \begin{aligned}#1 \end{aligned}\end{equation}} 
\newcommand{\bgs}[1]{\begin{equation*} \begin{aligned}#1\end{aligned}\end{equation*}} 
 \newcommand{\syslab}[2] []  {\begin{equation}#1  \left\{\begin{aligned}#2\end{aligned}\right.\end{equation}} 
  \newcommand{\sys}[2][]{\begin{equation*}#1  \left\{\begin{aligned}#2\end{aligned}\right.\end{equation*}}
  
\def\Xint#1{\mathchoice
{\XXint\displaystyle\textstyle{#1}}%
{\XXint\textstyle\scriptstyle{#1}}%
{\XXint\scriptstyle\scriptscriptstyle{#1}}%
{\XXint\scriptscriptstyle\scriptscriptstyle{#1}}%
\!\int}
\def\XXint#1#2#3{{\setbox0=\hbox{$#1{#2#3}{\int}$ }
\vcenter{\hbox{$#2#3$ }}\kern-.6\wd0}}

\def\dashint{\Xint-}

\DeclareMathOperator{\dist}{dist}
\def\Xint#1{\mathchoice
{\XXint\displaystyle\textstyle{#1}}%
{\XXint\textstyle\scriptstyle{#1}}%
{\XXint\scriptstyle\scriptscriptstyle{#1}}%
{\XXint\scriptscriptstyle\scriptscriptstyle{#1}}%
\!\int}
\def\XXint#1#2#3{{\setbox0=\hbox{$#1{#2#3}{\int}$ }
\vcenter{\hbox{$#2#3$ }}\kern-.6\wd0}}

\def\dashint{\Xint-}

\title[On the mean value property of fractional harmonic functions]{On the mean value property \\ of fractional harmonic functions}\thanks{
{\em Claudia Bucur}: Istituto Nazionale di Alta Matematica,
Piazzale Aldo Moro 5,
00185 Rome, Italy, and
Dipartimento di Matematica, Universit\`a di Milano,
Via Saldini 50, 20133 Milan, Italy. {\tt claudia.bucur@aol.com}
\\
{\em Serena Dipierro}:
Department of Mathematics
and Statistics,
University of Western Australia,
35 Stirling Hwy, Crawley WA 6009, Australia.
{\tt serena.dipierro@uwa.edu.au}\\
{\em Enrico Valdinoci}:
Department of Mathematics
and Statistics,
University of Western Australia,
35 Stirling Hwy, Crawley WA 6009, Australia. {\tt enrico.valdinoci@uwa.edu.au}\\
It is a pleasure to 
thank Krzysztof Bogdan for extremely
pleasant and very beneficial scientific discussions.
We are also indebted to the Referees for their
useful comments who helped us improve
this paper.
The authors are members of INdAM.
The first author is supported by the INdAM Starting Grant
``PDEs, free boundaries, nonlocal equations and applications''.
The second and third authors
are members of AustMS and
are supported by the Australian Research Council
Discovery Project DP170104880 NEW ``Nonlocal Equations at Work''.
The second author is supported by
the Australian Research Council DECRA DE180100957
``PDEs, free boundaries and applications''. Part of this work was carried
out during a very pleasant and fruitful visit of the first author to the
University of Western Australia, which we thank for the warm hospitality.}

\author{Claudia Bucur}
\author{Serena Dipierro}
\author{Enrico Valdinoci}

\keywords{Mean value formulas, fractional harmonic functions,
inverse problems, classification results}
\subjclass[2010]{35R11, 34A08, 35B05}

\begin{document}

\begin{abstract}
As well known, harmonic functions satisfy
the mean value property,
i.e. the average of such a function over a ball
is equal to its value at the center. 
This fact naturally raises the question on whether this is a feature characterizing only  balls,
namely, is a set, for which all harmonic functions
satisfy the mean value property, necessarily a ball?

This question was investigated by several authors,
including Bernard Epstein [Proc. Amer. Math. Soc., 1962],
Bernard Epstein and Menahem Max
Schiffer [J. Anal. Math., 1965],
Myron Goldstein and Wellington H. Ow
[Proc. Amer. Math. Soc., 1971], who 
obtained a positive answer to this question under suitable additional
assumptions.

The problem was finally elegantly, completely and positively
settled
by \"Ulk\"u
Kuran [Bull. London Math. Soc., 1972], with an artful use
of elementary techniques.

This classical problem has been recently fleshed out
by Giovanni Cupini, Nicola Fusco,
Ermanno Lanconelli and Xiao Zhong [J. Anal. Math., in press]
who proved a quantitative stability result for the mean value formula,
showing that a suitable
``mean value gap'' (measuring the normalized
difference between the average of harmonic functions on a given set
and their pointwise value) is bounded from below by the Lebesgue measure of the ``gap'' between the set and the ball (and, consequently, by
the Fraenkel asymmetry of the set). That is,
if a domain ``almost'' satisfies the mean value property for all harmonic functions, then
that domain is ``almost'' a ball.

The goal of this note is to investigate some nonlocal counterparts of these results. 
Some of our arguments rely on fractional potential theory,
others
on purely nonlocal properties, with no classical
counterpart, such as the fact that
``all functions are locally
fractional harmonic up to a small error''.
\end{abstract}
\maketitle

\tableofcontents

\section{Introduction}

\subsection{A fractional version of inverse
mean value properties}
A classical question, dating back to the works~\cite{MR0140700, MR0177124, MR0279320},
is to determine under which conditions a domain, 
providing a mean value property 
for every harmonic function, needs to be necessarily a ball.

More precisely, it is well known that if~$u$ is harmonic
in a domain, then $u$ satisfies the mean value property on every ball compactly contained in that domain. Precisely, say the closure
of a ball~$B_r$ (centered at the origin) is contained in the domain, then
\begin{equation*} \label{0-A}
u(0)=\dashint_{B_r}u(y)\,dy,\end{equation*}
where, as usual, the ``dashed'' integral symbol stands for the average.

The mean value property 
is certainly remarkable and of great importance in 
 the classical theory of harmonic functions. 
A natural question is 
to consider an ``inverse problem'' and try to classify
all domains for which a mean value formula can hold: namely,
if~$\Omega$ is a given domain of~$\R^n$ containing the origin
and with the property that
\begin{equation} \label{0-B} u(0)=\dashint_{\Omega}u(y)\,dy\end{equation}
for all functions~$u$ that are harmonic in~$\Omega$,
is it possible to say anything about~$\Omega$?
That is, how ``special'' are
the domains satisfying~\eqref{0-B}?

This problem was definitely settled by~\"{U}lk\"{u}
Kuran in~\cite{MR0320348}, who established, with a concise and very elegant proof,
that if~$\Omega$ is a bounded domain, containing the origin, such that \eqref{0-B} holds  for every harmonic, integrable function~$u$ in~$\Omega$,
then~$\Omega$ is a ball centered at the origin. 

As a matter of fact, the work in~\cite{MR0320348}
was the climax of a rather intense research in the sixties and seventies,
that started with~\cite{MR0140700}, in which
the classification result for domains satisfying~\eqref{0-B}
was obtained under the additional assumption
that~$\Omega$ was simply connected.
The simple connectivity assumption
was later replaced in~\cite{MR0177124} 
by the hypothesis that the complement of~$\Omega$
possesses a nonempty interior.
Also, in~\cite{MR0279320} the classification result
was obtained
for planar domains with
at least one boundary component which is a continuum.
Interestingly, not only the result in~\cite{MR0320348}
completed the previous works in~\cite{MR0140700, MR0177124, MR0279320},
but it also presented an elementary\footnote{\label{gui1}For completeness, let\label{HASSAms}
us briefly recall the proof in~\cite{MR0320348}:
up to a dilation, we can suppose that~$B_1\subset\Omega$,
with~$\tilde x\in(\partial \Omega)\cap(\partial B_1)$. Then, let
$$ h(x):=\frac{|x|^2-1}{|x-\tilde x|^n}+1.$$
Since~$h(0)=0$, $h\ge1$ in~$\R^n\setminus B_1$,
and~$h$ is harmonic in~$\R^n\setminus\{\tilde x\}$, using~\eqref{0-B} twice
(once for~$\Omega$ and once for~$B_1$),
it follows that
\begin{eqnarray*} &&0=|\Omega|\,h(0)=\int_{\Omega}h(y)\,dy
=\int_{B_1}h(y)\,dy+\int_{\Omega\setminus B_1}h(y)\,dy\\&&\qquad\qquad=
|B_1|\,h(0)+\int_{\Omega\setminus B_1}h(y)\,dy
=
\int_{\Omega\setminus B_1}h(y)\,dy\ge |\Omega\setminus B_1|,\end{eqnarray*}
therefore~$|\Omega\setminus B_1|=0$ and thus~$\Omega=B_1$.}
approach to the question based on the Poisson Kernel of the ball.
\smallskip

Besides its theoretical interest, the result in~\cite{MR0320348}
has also natural consequences in game theory,
since the expected payoff of a random walk with prizes placed
at the boundary of a domain is clearly related to harmonic functions, and thus the mean value property of harmonic functions in this context translates into the possibility of exchanging the average expected payoff in a given region with the pointwise expected payoff calculated at a special point of that region,
concretely, the center of the ball (and Kuran's result states
that this reduction is not possible either with other regions,
or with other points of the ball).\smallskip 

We also remark that, denoting by~${\mathcal{H}}^{n-1}$
the $(n-1)$-dimensional
Hausdorff measure,
an interesting variant of Kuran's result is as follows:
if~$\Omega$ is a given domain of~$\R^n$,
containing the origin
and with the property that
\begin{equation} \label{0-B-B} u(0)=\dashint_{\partial\Omega}u(y)\,d{\mathcal{H}}^{n-1}(y)\end{equation}
for all functions~$u$ that are harmonic in~$\Omega$,
then~$\Omega$ is necessarily a ball:
this was proved in Theorem~III.2 of~\cite{MR1021402}
(see also~\cite{MR2441608, MR2468442, MR3977217}). 
Interestingly, the condition in~\eqref{0-B-B} can also be
classically dealt with ``dual'' formulations involving
a prescription on the normal derivative of the Green function of~$\Omega$
(see Theorem III.1 in~\cite{MR1021402},
Section~7 in~\cite{MR3893584}, and the references therein).
Related results are contained in~\cite{MR2746441, MR2747460}.
See also~\cite{MR1321628}
for a classical survey on the spherical and volume averages
of harmonic functions.
\smallskip

\medskip

In this paper we begin to investigate 
some possible fractional counterparts of these classical results. 
Precisely, we plan to answer this question:  if the value of any fractional harmonic function
at a given point equals a suitable fractional mean value on a domain, is that domain the ball
centered at the given point?
We will then focus on some quantitative versions of this question
in terms of different possible ``gap''
functions.

These results are somehow reminiscent of the classical
mean value formula and of Kuran-type problems,
and we address two types of nonlocal results.
A first proposal regards the well-known
fractional mean value formula on the ball.
Our result in Theorem~\ref{THM}
relies on this special measure,
which is endowed with extra information linked
to the spherical behavior. In this sense, the analysis
 appears somewhat more
specialized than in the classical case, where the Lebesgue measure is used instead.

In a second result given in Theorem \ref{2DERe},
we consider a different problem,
in which the measure in the mean value formula
is instead modeled on 
the Poisson kernel (that is, the density function of the fractional
harmonic
measure). In this setting,
 we provide partial results
related
 to whether the limit behavior of the Poisson kernel at boundary points is a constant.\medskip

Some of our proofs deeply rely
on the potential theory of fractional operators,
as developed in~\cite{MR126885,
MR1438304, chengrennest, MR1654115, MR1980119, MR2006232, bogdan1, MR2256481, MR2365478, MR2569321, MR4061422}.
Other proofs take
advantage of
the particular structure of nonlocal equations, in particular we 
exploit the main result of~\cite{DSV14}: any smooth function locally approximates a fractional harmonic function. In this way,  we construct a fractional harmonic function with the desired properties (that plays the role that the Poisson kernel played in the proof of Kuran, see footnote at page \pageref{gui1}). We think that this is a nice example of how, in some occasions, 
the nonlocal setting 
provides a technical and conceptual simplification with respect to the classical case.
\medskip

In terms of motivation
and application of fractional mean value
formulas, we also mention that a fractional version of the expected
payoff game with {L}\'{e}vy processes
in a given domain and prizes set in the complement of the domain are described
in detail, for instance, in Chapter~2.2
of~\cite{nonlocal}.

\medskip

To state our fractional versions of inverse mean value properties,
we introduce some notations and preliminary notions. 
Here and in the rest of the paper $\Omega \subset \Rn$ is a bounded open set and $s\in(0,1)$ is a fixed number. 
Moreover, as customary,
we use the notation~$\Co \Omega:=\R^n\setminus\Omega$.

We recall that a function~$u:\R^n\to\R$
(say, for simplicity,
sufficiently smooth in a given
domain $\Omega\subset \Rn$),
satisfying
\begin{equation*}\label{L1s}
\int_{\R^n} \frac{|u(y)|}{1+|y|^{n+2s}}\,dy<+\infty
\end{equation*}
is $s$-harmonic in~$\Omega$  if 
\[ (-\Delta)^s u = 0 \qquad \mbox{ in }\; \; \Omega,\]
where
\[ (-\Delta)^s u(x) :=P.V.\int_{\Rn} \frac{u(x)-u(x-y)}{|y|^{n+2s}}\, dy \]
is the fractional Laplace operator (see, for instance,~\cite{nonlocal,mateusz,gettinacq}). 
 	It is known, as in the classical case, that a function~$u$ is $s$-harmonic in $\Omega$ if and only if $u$
possesses the following mean value property:
\eqlab{ \label{mvpx} u(0) = 
c(n,s)\,\int_{\Co B_r} \frac{ r^{2s}\, u(y)}{(|y|^2-r^2)^s|y|^n}\, dy ,}
 for any $r>0$ such that ${B_r} \subset\subset \Omega$ 
 (see~\cite[Theorem 2.1]{Abatangelo},~\cite[Lemma A.6]{BucurGreen}
or~\cite[Chapter 1.6]{Landkof}). Here, the notation~$c(n,s)$
stands for a positive, normalizing constant.
In particular, taking~$u:=1$ in~\eqref{mvpx} and
setting
\begin{equation} \label{musta1}
d\mu_{r}(y):=
\frac{c(n,s)\,
	r^{2s}\, dy}{(|y|^2 -r^2)^s |y|^{n} },
\end{equation}
then~$\mu_r$ is a measure on~$\Co B_r$, with
\begin{equation} \label{998y99iuqhjss89}
\mu_r(\Co B_r)=1.
\end{equation}
In this framework, we can write~\eqref{mvpx} in the form \begin{equation}
\label{mvp} u(0) = 
\int_{\Co B_r} u(y)\, d\mu_r(y)=
\frac{1}{\mu_r(\Co B_r)}\int_{\Co B_r} u(y)\, d\mu_r(y),\end{equation}
for any $r>0$ such that 
\begin{equation}\label{LemmaJAK:AA3440}
{B_r} \subset\subset \Omega.\end{equation}
As a matter of fact, if in addition~$u\in C(\R^n)$,  
then~\eqref{LemmaJAK:AA3440} can be replaced
by the weaker condition that
\begin{equation}\label{LemmaJAK:AA34400}
{B_r} \subset\Omega,\end{equation}
see Lemma~\ref{LemmaJAK:AA344}
in Appendix~\ref{UNAryty45666}.
\medskip

We now discuss a suitable inverse problem for~\eqref{mvp}.
To state it, we define
\begin{equation}\label{SPAZ} 
\mathcal H^s(\Omega):= \left\{ u \in 
C(\R^n) \mbox{ s.t.~}
\; \int_{\R^n} \frac{|u(y)|}{1+|y|^{n+2s}}\,dy<+\infty
\, \mbox{ and }\; (-\Delta )^s u =0  \mbox{ in } \Omega\right\}.
\end{equation}
In this setting, we have the following result.

\begin{theorem}\label{THM}
Let $\Omega \subset \Rn$ be a bounded open set, containing the origin,
and define
\begin{equation}\label{ERRE}
r:={\rm{dist}}(0, \partial \Omega).
\end{equation}
Suppose that
\begin{equation}\label{OMEGA}
u(0)=\frac{1}{\mu_r(\Co \Omega)}\int_{\Co \Omega} u(y) \, d\mu_r (y)
\end{equation}
for all functions~$u\in\mathcal H^s(\Omega)$.

Then
\begin{equation*}\label{BALL}
 \Omega = B_r .
 \end{equation*} 
\end{theorem}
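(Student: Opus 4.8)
The plan is to mimic Kuran's classical argument, as recalled in the footnote at page~\pageref{gui1}, but using the fractional Poisson kernel of the ball in place of the classical one. First I would set $r:=\dist(0,\partial\Omega)$ as in~\eqref{ERRE}, so that $B_r\subset\Omega$ by definition and, by compactness, there is a point $\tilde x\in(\partial\Omega)\cap(\partial B_r)$. The goal is to produce a function $h\in\mathcal H^s(\Omega)$ that vanishes at the origin, is bounded below by a positive constant on $\Co B_r$ (or at least on $\Co\Omega$), and for which the hypothesis~\eqref{OMEGA} forces $|\Omega\setminus B_r|=0$, hence $\Omega=B_r$.

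The natural candidate is built from the fractional Poisson kernel for the complement of the ball. Recall that for a point $z\in\Co B_r$ the function
\begin{equation*}
P_r(z,\cdot)\colon y\longmapsto c_{n,s}\left(\frac{r^2-|z|^2}{|y|^2-r^2}\right)^{s}\frac{1}{|z-y|^n}
\end{equation*}
is (for $|y|>r$) the density of the fractional harmonic measure of $B_r$ evaluated at $z$; equivalently, $y\mapsto P_r(z,y)$ extended suitably is $s$-harmonic in $B_r$ and in $\R^n\setminus(\closure{B_r}\cup\{z\})$. The idea is to fix $z$ just outside $\closure{B_r}$ along the ray through $\tilde x$ and to consider a function of the form $h(y):= A\,\Phi_z(y) - P_r(z,y)$ (or an analogous combination), where $\Phi_z$ accounts for the behaviour in $B_r$, chosen so that $h$ is $s$-harmonic in all of $\Omega$, $h(0)=0$, and $h\ge c>0$ on $\R^n\setminus B_r$. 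Then, applying the mean value identity~\eqref{mvp} on the ball $B_r$ (legitimate since $h\in C(\R^n)$ and $B_r\subset\Omega$, using the version with~\eqref{LemmaJAK:AA34400}) and the hypothesis~\eqref{OMEGA} on $\Omega$, one writes
\begin{equation*}
0=\mu_r(\Co\Omega)\,h(0)=\int_{\Co\Omega}h(y)\,d\mu_r(y)
=\int_{\Co B_r}h(y)\,d\mu_r(y)-\int_{(\Co B_r)\setminus(\Co\Omega)}h(y)\,d\mu_r(y),
\end{equation*}
where $(\Co B_r)\setminus(\Co\Omega)=\Omega\setminus B_r$. Since $\int_{\Co B_r}h\,d\mu_r=h(0)=0$ by~\eqref{mvp}, this yields $\int_{\Omega\setminus B_r}h(y)\,d\mu_r(y)=0$, and the lower bound $h\ge c>0$ on $\R^n\setminus B_r$ together with $\mu_r>0$ forces $\mu_r(\Omega\setminus B_r)=0$, hence $|\Omega\setminus B_r|=0$; since $\Omega$ is open this gives $\Omega\subset\closure{B_r}$ and therefore $\Omega=B_r$.

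The main obstacle is the construction of $h$: one must exhibit an explicit $s$-harmonic function on $\Omega$ with a prescribed zero at the origin and a uniform positive lower bound outside $B_r$, replacing the elementary closed-form expression $\frac{|x|^2-1}{|x-\tilde x|^n}+1$ available in the classical case. The natural route is to take $z\to\tilde x$ in the Poisson-kernel construction, or to use the fractional Green function and Martin kernel of $B_r$ (as developed in the references cited after~\eqref{mvpx}), and then to verify the three required properties: $s$-harmonicity in $\Omega$ (which needs $z\notin\overline\Omega$, or a careful limiting/continuity argument if $z=\tilde x\in\partial\Omega$, exploiting that $\tilde x\notin B_r$ and $\tilde x\notin\Omega$ up to taking the closure), the normalization $h(0)=0$ (a one-parameter adjustment of an additive or multiplicative constant), and the positivity $h\ge c>0$ on $\Co B_r$, which follows from the strict positivity and boundary behaviour of the Poisson/Martin kernel. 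Some care is also needed at $\tilde x$ itself, where the kernel degenerates; one checks that $\tilde x$ lies on $\partial B_r$ and the integrability weight in $d\mu_r$ together with $\tilde x\notin\Omega$ (more precisely $\tilde x\notin\Omega\setminus B_r$ since $\Omega\setminus B_r$ is open and avoids $\partial B_r$) makes the argument go through. Once $h$ is in hand, the rest is the short computation above, entirely parallel to Kuran's proof.
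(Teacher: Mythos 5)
Your reduction of the theorem to the existence of a single test function $h$, and the final computation with \eqref{OMEGA} and \eqref{mvp}, coincide with the paper's argument (compare the identity \eqref{eqn1} in the paper's first proof). However, the function you are asking for cannot exist, so the construction step — which you correctly identify as the main obstacle and leave open — is not merely technical: it is impossible as stated. Indeed, if $h\in\mathcal H^s(\Omega)$ satisfies $h(0)=0$ and $h\ge c>0$ on $\Co B_r$ (or even just on $\Co\Omega$), then Lemma~\ref{LemmaJAK:AA344} applied to $B_r\subset\Omega$ gives
\begin{equation*}
0=h(0)=\int_{\Co B_r}h(y)\,d\mu_r(y)\ge c\,\mu_r(\Co B_r)=c>0,
\end{equation*}
a contradiction, by \eqref{998y99iuqhjss89}. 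This is exactly the point where the naive transplant of Kuran's argument breaks down: classically the average of $h$ is taken \emph{over the ball}, where Kuran's function is negative, whereas the fractional mean value \eqref{mvpx} averages over the \emph{exterior} of the ball, so a function vanishing at the origin cannot be uniformly positive there.

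The correct requirement is weaker: one only needs the sign condition on the bounded set $\Omega\setminus B_r$ (which is all your final computation actually uses), and outside a large ball $B_R\supset\Omega$ the test function may change sign. The paper produces such a function not from Poisson or Martin kernels but from the approximation theorem of \cite{DSV14}: an $s$-harmonic $f_{r,R}$ on $B_R$ that is uniformly close to $|x|^2$ on $B_R$, so that $u^\star:=-f_{r,R}+f_{r,R}(0)$ vanishes at $0$ and satisfies $-u^\star\ge r^2/2$ on $B_R\setminus B_r\supseteq\Omega\setminus B_r$. Your alternative route via the Poisson kernel with pole near $\tilde x\in(\partial\Omega)\cap(\partial B_r)$ also does not lead where you want: the paper's second (potential-theoretic) proof does use mollified Poisson kernels, but with the pole at a contact point $p^*$ of an interior tangent ball contained in $\Omega\setminus\closure{B_r}$, and the contradiction comes from $P_\varpi(0,p_j)\to+\infty$ while the right-hand side of \eqref{OMEGA} stays finite because $p^*\notin\closure{B_r}$; with your choice of pole on $\partial B_r$ the weight $(|y|^2-r^2)^{-s}$ blows up as well and no mismatch is obtained. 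As written, the proposal therefore has a genuine gap at its central step.
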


In short, Theorem~\ref{THM} says that if~$\Omega$
satisfies a fractional mean value property (compare~\eqref{mvp}
and~\eqref{OMEGA}) with respect to a suitable measure, then~$\Omega$
is necessarily a ball. Two proofs of
Theorem~\ref{THM} are provided
in Section \ref{uno}, using both a typically nonlocal and a
potential theoretic approach.
\medskip

We point out that if~$r$ is as in~\eqref{ERRE},
then~\eqref{LemmaJAK:AA34400} is satisfied
(but~\eqref{LemmaJAK:AA3440} does not hold,
and this makes the result in Lemma~\ref{LemmaJAK:AA344}
technically important for our goals).
\medskip

On the one hand, we can consider the setting in~\eqref{OMEGA}
as a nonlocal transposition of that in~\eqref{0-B-B},
in which the classical averages along the boundary of the domain
(corresponding to classical Dirichlet conditions)
are replaced by suitable fractional
averages in the exterior of the domain (corresponding to fractional Dirichlet conditions,
which are indeed external, and not boundary, prescriptions).
 On the other hand, we stress that the special role played by the fractional
harmonic mean formulas here is quite different than in the classical
case, in which the surface measure on~$\partial\Omega$
is not the
restriction of the surface measure on~$\partial B_r$ to~$\partial\Omega$.
\medskip

We remark that the situation in Theorem~\ref{THM}
would be completely different if one replaced~\eqref{OMEGA}
with a similar formula holding for a suitable measure~$\mu$,
of the type
\begin{equation*}\label{SW} u(0)=\int_{\Co \Omega} u(y) \, d\mu (y).\end{equation*}
Indeed, 
this problems is structurally very
different from the setting in~\eqref{OMEGA},
since 
it is related to the ``balayage'' problems
for fractional harmonic functions and hold true
by taking~$\mu$ as the fractional harmonic measure
(see e.g.~\cite[formula~(4.5.9) and Theorem~4.16]{Landkof},
\cite[Lemma~17]{MR1438304},
\cite[Theorem~7.2]{MR3888401},
\cite[Section~2.2]{MR2365802},
\cite[Remark~3.1]{MR3750233},
and also~\cite{MR1893056} and the references therein).
The fractional harmonic measure~$\mu$ 
is related to the Poisson kernel and
has also a probabilistic interpretation, being
the distribution of a L\'evy process started at the origin
and stopped when exiting the domain~$\Omega$.
In general, these considerations highlight the importance
of carefully choosing the measure~$\mu_r$ in~\eqref{OMEGA}
if one is interested in classification results for the domain~$\Omega$,
which would not be valid for other types of measures (e.g.,
for the fractional harmonic measure).
\medskip

Other lines of investigation related to harmonic measures 
and more generally to averages
of subharmonic and superharmonic functions with respect
to different measures, are linked to the notion
of Jensen measures, see e.g.~\cite{MR1873590, MR1876284, MR2379689}. 
For other type of classification results
concerning different averages, see~\cite{MR2584983},
and also~\cite{MR637445, MR860918, MR1021402}.
\medskip

The proof of Theorem~\ref{THM}
is contained in Section~\ref{thm11}.

\subsection{Stability results for the fractional mean value property}

Another natural development is to establish 
a quantitative version of Theorem~\ref{THM}
in view of the stability results in~\cite[Theorem 1.1]{Cup}
for the classical case.  We 
plan to understand whether the fact that
every $s$-harmonic function is ``close'' to
its mean value on a domain implies that 
the domain is necessarily ``close'' to being a ball,
or vice-versa, if the ``distance'' between
the pointwise value of an $s$-harmonic functions and
its mean value on a domain
remains bounded away from zero, unless the domain is a ball.
\medskip

  To this end, 
we introduce several notions of ``gaps'',
which in turn will provide structurally different results.
We consider $\Omega \subset \Rn$ to be a bounded and open set
containing the origin, and we denote $r$ as in~\eqref{ERRE}
and~$\mu_r$ as in~\eqref{musta1}.
Taking inspiration from in~\cite[formula~(1.2)]{Cup},
we define the rescaled fractional Gauss mean value gap
\begin{equation}\label{F86243gerre} G_r(\Omega):=\sup_{u\in \H^s(\Omega)} \frac{\displaystyle \bigg|u(0)- \frac{1}{\mu_r(\Co \Omega)} \int_{\Co \Omega} u(y)\, d\mu_r (y)\bigg|}{\displaystyle \int_{\Co B_r} |u(y)| \, d\mu_r (y)}.\end{equation}
In light of~\eqref{mvp}, we know that balls
make the fractional Gauss mean value gap vanish. 
We prove that, conversely, all other
sets produce significant gaps,
and it is impossible to make~$G_r(\Omega)$ smaller
than a universal threshold, unless~$\Omega$ is a ball.
We state the precise quantitative result in the next theorem.

\begin{theorem} \label{STA5}
It holds that~$G_r(B_r)=0$, and that~$  G_r(\Omega)\ge1$ for all~$\Omega$ 
such that~$0\in \Omega$,
${\rm{dist}}(0, \partial \Omega)=r$
and~$\Omega\setminus B_r\ne \varnothing$. 
\end{theorem}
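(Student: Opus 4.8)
The plan is to leverage the same ``rigid barrier'' idea used in Kuran's proof (recalled in footnote~\ref{gui1}) and in the proof of Theorem~\ref{THM}, but now tracking the inequality quantitatively instead of discarding it. First I would dispose of the trivial assertion~$G_r(B_r)=0$: this is immediate from the mean value formula~\eqref{mvp}, since every~$u\in\H^s(B_r)$ satisfies~$u(0)=\int_{\Co B_r}u\,d\mu_r=\frac1{\mu_r(\Co B_r)}\int_{\Co B_r}u\,d\mu_r$ because~$\mu_r(\Co B_r)=1$ by~\eqref{998y99iuqhjss89}, so the numerator in~\eqref{F86243gerre} vanishes identically.

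For the main claim, suppose~$0\in\Omega$, $\dist(0,\partial\Omega)=r$ and~$\Omega\setminus B_r\neq\varnothing$; since~$\Omega$ is open we may fix~$\bar x\in(\partial\Omega)\cap(\partial B_r)$ (such a point exists because~$r=\dist(0,\partial\Omega)$). The key step is to exhibit a single competitor~$u\in\H^s(\Omega)$ making the quotient in~\eqref{F86243gerre} at least~$1$. Following the nonlocal proof of Theorem~\ref{THM}, I would take the (suitably normalized) fractional Poisson kernel of the ball~$B_r$ with singularity at~$\bar x$, i.e. a function of the form~$u(x)=c\,(r^2-|x|^2)_+^s\,|x-\bar x|^{-n}$ for~$x\in B_r$, extended by~$0$ on~$\Co B_r$, with~$c>0$ chosen so that~$\int_{\Co B_r}u\,d\mu_r$ has a convenient value. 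This~$u$ is continuous on~$\R^n$, belongs to the weighted~$L^1$ space, is~$s$-harmonic in~$\R^n\setminus\{\bar x\}\supset\Omega$ (note~$\bar x\notin\Omega$ since~$\bar x\in\partial\Omega$), and crucially it vanishes on~$\Co B_r\supset\Co\Omega$. Therefore~$u(0)=0$ by~\eqref{mvp} applied on~$B_r$ (using the extension of~\eqref{mvp} to~$B_r\subset\Omega$ via Lemma~\ref{LemmaJAK:AA344}), while~$\int_{\Co\Omega}u\,d\mu_r=0$ as well since~$u\equiv0$ there; this would make both numerator and denominator vanish, so this naive choice is \emph{not} admissible and must be corrected.

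The fix is to add a constant: set~$v:=1-u$ where~$u$ is a normalized version of the above Poisson kernel, scaled so that~$u\ge0$ everywhere, $u(0)=1$ (this pins down~$c$, using that~$u(0)=\int_{\Co B_r}u\,d\mu_r$ forces the right normalization via~\eqref{mvp}), and~$u\equiv0$ on~$\Co B_r$. Then~$v\in\H^s(\Omega)$ with~$v(0)=0$, $v\equiv1$ on~$\Co\Omega\subset\Co B_r$, so~$\frac1{\mu_r(\Co\Omega)}\int_{\Co\Omega}v\,d\mu_r=1$, giving numerator~$=|0-1|=1$. For the denominator, $\int_{\Co B_r}|v|\,d\mu_r=\int_{\Co B_r}|1-u|\,d\mu_r=\int_{\Co B_r}1\,d\mu_r=\mu_r(\Co B_r)=1$ because~$u\equiv0$ on~$\Co B_r$. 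Hence the quotient equals~$\frac{1}{1}=1$, so~$G_r(\Omega)\ge1$. I expect the only genuinely delicate point to be justifying that~$u(0)=1$ under the stated normalization and that~$v$ genuinely lies in~$\H^s(\Omega)$ — i.e. verifying the continuity of~$u$ across~$\partial B_r$ (the factor~$(r^2-|x|^2)_+^s$ vanishes there, but one must check it does so continuously even near~$\bar x$, where~$|x-\bar x|^{-n}$ blows up) and the weighted integrability near~$\bar x$; both follow from the explicit form of the fractional Poisson kernel, as recorded in the references~\cite{Landkof,BucurGreen} and used already in the proof of Theorem~\ref{THM}. Finally, I should double-check that taking~$r=\dist(0,\partial\Omega)$ rather than merely~$B_r\subset\subset\Omega$ causes no trouble: this is precisely where Lemma~\ref{LemmaJAK:AA344} (allowing~\eqref{LemmaJAK:AA34400} in place of~\eqref{LemmaJAK:AA3440} for continuous~$u$) is invoked, exactly as remarked after Theorem~\ref{THM}.
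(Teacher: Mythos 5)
Your first assertion, $G_r(B_r)=0$, is handled correctly and exactly as in the paper. The lower bound, however, rests on a competitor that does not exist, and the approach cannot be repaired. The decisive objection is structural: any $u\in\H^s(\Omega)$ vanishing identically on $\Co B_r\supseteq\Co\Omega$ must vanish identically on $\R^n$, by the maximum principle (it is continuous, $s$-harmonic in $\Omega$, and zero on $\Co\Omega$); hence no admissible $u$ with $u(0)=1$ and $u\equiv0$ on $\Co B_r$ exists, and your $v=1-u$ is not a legitimate competitor. A quick sanity check confirms that something must be wrong: your construction never uses the hypothesis $\Omega\setminus B_r\ne\varnothing$ (only the touching point $\bar x\in\partial\Omega\cap\partial B_r$, which exists for $\Omega=B_r$ as well), so if it worked it would yield $G_r(B_r)\ge1$, contradicting the first part of the theorem.

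The concrete failures of the proposed $u(x)=c\,(r^2-|x|^2)_+^s\,|x-\bar x|^{-n}$ are: (i) it is not $s$-harmonic across the region where it vanishes --- for $x_0\in\Omega\setminus\overline{B_r}$ one computes $(-\Delta)^s u(x_0)=-\int_{B_r}u(y)\,|x_0-y|^{-n-2s}\,dy<0$ --- so, unlike Kuran's barrier $h$, which is harmonic in $\R^n\setminus\{\tilde x\}$ because classical harmonicity is local, the zero extension of this Martin-type kernel is not $s$-harmonic in $\Omega$; (ii) it is not continuous at $\bar x$: approaching radially, $r^2-|x|^2\sim 2rt$ while $|x-\bar x|=t$, so $u\sim c\,(2r)^s\,t^{s-n}\to+\infty$, violating the $C(\R^n)$ requirement in \eqref{SPAZ}; (iii) the normalization $u(0)=1$ ``via \eqref{mvp}'' is inconsistent, since \eqref{mvp} applied to a function vanishing on $\Co B_r$ forces $u(0)=0$. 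The paper's proof of Theorem \ref{STA5} instead places the singularity \emph{outside} $\Omega$: it uses $\Omega\setminus B_r\ne\varnothing$ to find a ball $B^*\subset\Omega\setminus\overline{B_r}$ touching $\partial\Omega$ at a point $p^*\notin\overline{B_r}$, sets $\varpi:=B_r\cup B^*$, takes as competitors the solutions $u_{k,p_j}$ of the Dirichlet problem in $\varpi$ with exterior datum an approximate Dirac mass at $p_j\in\Co\Omega$, $p_j\to p^*$, and obtains the bound in the limit from the blow-up of the Poisson kernel $P_\varpi(0,p_j)$ against the boundedness of the averaged term. You would need a device of this kind --- singularity approached from $\Co\Omega$, with harmonicity obtained by actually solving a Dirichlet problem --- rather than transplanting Kuran's explicit barrier.
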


As a variation of the gap in~\eqref{F86243gerre},
one can consider
the quantity
\begin{equation*} \begin{split}
G^*_r(\Omega)\,&:=\sup_{u\in \H^s(\Omega)} \frac{\displaystyle \bigg|u(0)- \frac{1}{\mu_r(\Co \Omega)} \int_{\Co \Omega} u(y)\, d\mu_r (y)\bigg|}{\left|
\displaystyle \int_{\Co B_r} u(y) \, d\mu_r (y)\right|}
.\end{split}\end{equation*}
We observe that~$G_r(\Omega)\le G^*_r(\Omega)$, hence~$
G^*_r(\Omega)\ge1$ whenever $\Omega\setminus B_r\ne \varnothing$, in light of Theorem~\ref{STA5}.
We can sharpen this estimate and
prove that $G^*_r(\Omega)$ can become arbitrarily large.

\begin{theorem}\label{7u8j9234905iitt}
Let~$r>0$ and let~${\mathcal{U}}_r$ be the family of
bounded
open sets~$\Omega$, with~$C^\infty$ boundary such that~$0\in \Omega$ and ${\rm{dist}}(0, \partial \Omega)=r$.
Then~$G_r^*(B_r)=0$ and 
\begin{equation}\label{5t6t6g6h89g304}
\sup_{\Omega\in{\mathcal{U}}_r}G^*_r(\Omega)=+\infty.
\end{equation}
\end{theorem}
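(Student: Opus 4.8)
The plan is to construct, for every large constant $M$, a domain $\Omega\in\mathcal{U}_r$ for which one can exhibit an $s$-harmonic function $u\in\H^s(\Omega)$ making the quotient defining $G_r^*(\Omega)$ larger than $M$. The vanishing of $G_r^*(B_r)$ is immediate from the mean value formula \eqref{mvp}: when $\Omega=B_r$ the numerator is identically zero for every admissible $u$, so the supremum is $0$. For \eqref{5t6t6g6h89g304}, the key idea is that the denominator $\bigl|\int_{\Co B_r}u(y)\,d\mu_r(y)\bigr|$ is exactly $|u(0)|$ by \eqref{mvp}, so one wants an $s$-harmonic $u$ with $u(0)$ very small (or zero) while the exterior average of $u$ against $\mu_r$ over $\Co\Omega$ stays of order one. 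In other words: rescale so $B_1\subset\Omega$ and pick a point $\bar x\in(\partial\Omega)\cap(\partial B_1)$; then engineer $u$ to be $s$-harmonic in $\Omega$, to vanish at the origin, and to be large and of one sign on the ``extra region'' $\Omega\setminus B_1$ (where $\mu_r$ has positive density) so that $\int_{\Co\Omega}u\,d\mu_r=\int_{\Co B_1}u\,d\mu_r-\int_{\Omega\setminus B_1}u\,d\mu_r$ has large absolute value while $|u(0)|$ can be made arbitrarily small relative to it.

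To build such a $u$, I would invoke the approximation result of~\cite{DSV14} already quoted in the introduction: any prescribed smooth function on a compact set can be approximated, in $C^k$ on that set, by a function $s$-harmonic in a neighborhood of that set. Concretely, fix a smooth target $\varphi$ supported near $\bar x$ with $\varphi(0)=0$ and $\int \varphi\,d\mu_1$ large, and let $u$ be an $s$-harmonic approximant on $\closure\Omega$; by choosing the approximation fine enough one keeps $|u(0)|$ tiny and $\bigl|\int_{\Co\Omega}u\,d\mu_r\bigr|$ bounded below. The domain $\Omega$ itself is chosen to have a smooth bump attached to $B_1$ near $\bar x$ so that $\Omega\setminus B_1$ carries enough $\mu_r$-mass for the exterior integral to see the bump; tuning the height of this bump against the approximation error gives the quotient $\ge M$. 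One then undoes the rescaling to return to a fixed radius $r$.

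The main obstacle is a bookkeeping one: the set $\Co B_r$ on which $\mu_r$ lives is unbounded, so one must control the tail contribution $\int_{\Co B_1\setminus K}|u|\,d\mu_1$ for the $s$-harmonic approximant $u$ outside a large compact set $K$, since \cite{DSV14} controls $u$ only on compacts. This is handled by noting that $\H^s(\Omega)$-functions have controlled growth (the weighted $L^1$ condition in \eqref{SPAZ}) and that $d\mu_1$ decays like $|y|^{-n-2s}$, so the tail is small; alternatively one works with $u$ bounded (e.g.\ subtract off a harmless correction or use boundedness of the approximant from \cite{DSV14}) so that $\int_{\Co B_1}|u|\,d\mu_1\le\|u\|_{L^\infty}<\infty$ trivially. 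The remaining steps — verifying $\Omega\in\mathcal{U}_r$ after attaching the $C^\infty$ bump, checking $\dist(0,\partial\Omega)=r$ is preserved, and assembling the estimate on the quotient — are routine once the approximation scheme and the tail bound are in place.
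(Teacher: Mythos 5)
Your argument is correct in substance, but it takes a genuinely different route from the paper's. The paper keeps the \emph{denominator} of order one: after rescaling to $r=1$, it applies the approximation theorem of~\cite{DSV14} to a function equal to $1$ near the origin and to $0$ on $\Omega_r\setminus B_1$, obtaining the quantitative bound $G^*_r(\Omega)\ge\frac{1}{\mu_r(\Co\Omega)}-1$; the blow-up then comes from varying the domain, namely from a sequence of smooth sets $\Omega^{(\delta)}$ (essentially $B_r$ together with the complement of a thin slab, truncated at radius $1/\delta$) for which $\mu_r(\Co\Omega^{(\delta)})\to0$. You instead drive the \emph{denominator} to zero on a single fixed domain with a bump: approximating a nonnegative $\varphi$ supported in the bump with $\varphi(0)=0$ gives $|u(0)|\le\eps$, while the mean value property yields $\int_{\Co\Omega}u\,d\mu_r=u(0)-\int_{\Omega\setminus B_r}u\,d\mu_r\le 2\eps-c_0$ with $c_0:=\int_{\Omega\setminus B_r}\varphi\,d\mu_r>0$, so the quotient is at least $(c_0-3\eps)/\eps$. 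This in fact proves the stronger statement that $G^*_r(\Omega)=+\infty$ for every fixed $\Omega$ with $\mu_r(\Omega\setminus B_r)>0$ (so you do not need to vary $\Omega$ with $M$ at all), exposing a degeneracy of the normalization in $G^*_r$; the paper's route is less direct but buys the geometric estimate linking $G^*_r(\Omega)$ to $\mu_r(\Co\Omega)$.

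Three loose ends to tighten. First, your tail discussion is both imprecise and unnecessary: the weighted $L^1$ condition in \eqref{SPAZ} gives finiteness, not smallness, of the tail; but no tail estimate is needed, since \eqref{mvp} converts $\int_{\Co B_r}u\,d\mu_r$ into $u(0)$ and reduces $\int_{\Co\Omega}u\,d\mu_r$ to $u(0)$ minus an integral over the bounded set $\Omega\setminus B_r$, which lies inside the approximation region (and the approximant of~\cite{DSV14} is compactly supported in any case). Second, guard against $u(0)=0$, which makes the quotient undefined: adding a small constant leaves the numerator unchanged and makes the denominator positive and as small as desired. Third, take $\varphi$ supported inside the open set $\Omega\setminus\overline{B_r}$, not merely ``near $\bar x\in\partial\Omega\cap\partial B_1$'', so that $c_0>0$ is guaranteed.
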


As a third fractional gap, we consider
\begin{equation}\label{Evisd923hhdgur945}
{\mathcal{G}}_r(\Omega):=
\sup_{{u\in \H^s(\Omega)}\atop{\|u\|_{L^\infty(\Omega)}\leq1}}
\bigg|\mu_r(\Co \Omega)\,u(0)- \int_{\Co \Omega} u(y)\, d\mu_r (y)\bigg|.\end{equation}
Differently from the previous gaps, ${\mathcal{G}}_r(\Omega)$
comprises a precise information with respect to~$\mu_r(\Omega\setminus B_r)$,
namely small values of~${\mathcal{G}}_r(\Omega)$
correspond to 
small values of $\mu_r(\Omega\setminus B_r)$, and viceversa
a small value of $\mu_r(\Omega\setminus B_r)$  produces a small ${\mathcal{G}}_r(\Omega)$. The precise result is the following.

\begin{theorem}\label{GSTOTRA}
There exists a universal number~$C>0$,
such that
$$ {\mathcal{G}}_r(\Omega)\in\left[ \frac{\mu_r( \Omega\setminus B_r)}{C},\,
C\mu_r( \Omega\setminus B_r)\right].$$
\end{theorem}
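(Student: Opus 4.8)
The plan is to establish the two-sided bound on~${\mathcal{G}}_r(\Omega)$ by comparing the maximization over~$\H^s(\Omega)$ with~$\|u\|_{L^\infty(\Omega)}\le1$ against the controlled quantity~$\mu_r(\Omega\setminus B_r)$, exploiting the key nonlocal fact from~\cite{DSV14} that any smooth function is, up to a small error, $s$-harmonic on~$B_r$. Let me first record the algebraic identity underlying both inequalities. For any~$u\in\H^s(\Omega)$, the mean value property~\eqref{mvp} on~$B_r$ (valid since~$B_r\subset\Omega$ and~$u\in C(\R^n)$, by Lemma~\ref{LemmaJAK:AA344}) gives~$u(0)=\int_{\Co B_r}u\,d\mu_r$. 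Hence, writing~$\Co B_r=\Co\Omega\cup(\Omega\setminus B_r)$ as a disjoint union (up to a null set), we get
\begin{equation*}
\mu_r(\Co\Omega)\,u(0)-\int_{\Co\Omega}u\,d\mu_r
=\mu_r(\Co\Omega)\int_{\Co B_r}u\,d\mu_r-\int_{\Co\Omega}u\,d\mu_r
=\bigl(\mu_r(\Co\Omega)-1\bigr)\int_{\Co B_r}u\,d\mu_r+\int_{\Omega\setminus B_r}u\,d\mu_r,
\end{equation*}
and since~$\mu_r(\Co B_r)=1$ by~\eqref{998y99iuqhjss89}, we have~$\mu_r(\Co\Omega)-1=-\mu_r(\Omega\setminus B_r)$, so
\begin{equation}\label{KEYIDENTITYplan}
\mu_r(\Co\Omega)\,u(0)-\int_{\Co\Omega}u\,d\mu_r
=\int_{\Omega\setminus B_r}\bigl(u(y)-u(0)\bigr)\,d\mu_r(y).
\end{equation}

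For the upper bound, I would simply estimate the right-hand side of~\eqref{KEYIDENTITYplan}: since~$\|u\|_{L^\infty(\Omega)}\le1$ and~$0\in\Omega$, we have~$|u(y)-u(0)|\le 2$ for~$y\in\Omega\setminus B_r$ (note~$\Omega\setminus B_r\subset\Omega$), whence~$|\mu_r(\Co\Omega)u(0)-\int_{\Co\Omega}u\,d\mu_r|\le 2\,\mu_r(\Omega\setminus B_r)$. Taking the supremum over admissible~$u$ yields~${\mathcal{G}}_r(\Omega)\le 2\,\mu_r(\Omega\setminus B_r)$, so~$C=2$ works on that side.

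For the lower bound, I want to produce a single competitor~$u\in\H^s(\Omega)$ with~$\|u\|_{L^\infty(\Omega)}\le1$ for which the right-hand side of~\eqref{KEYIDENTITYplan} is comparable to~$\mu_r(\Omega\setminus B_r)$ from below. The natural choice is to take a function that is close to the constant~$-1$ on a large portion of~$\Omega\setminus B_r$ while vanishing at the origin, so that~$u(y)-u(0)\approx -1$ there; then the integral in~\eqref{KEYIDENTITYplan} has size~$\approx\mu_r(\Omega\setminus B_r)$. Concretely, fix a smooth~$\varphi:\R^n\to[-1,0]$ with~$\varphi=0$ on a neighborhood of~$\closure{B_{r/2}}$ (ensuring~$\varphi(0)=0$) and~$\varphi=-1$ outside~$B_{\rho}$ for some~$\rho>r$; since~$\Omega$ is bounded, choose~$\rho$ so that~$\Omega\subset B_\rho$, but be careful that this forces~$\varphi=-1$ on all of~$\Omega\setminus B_\rho=\varnothing$ — instead I would argue with a family indexed by a small parameter. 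The cleanest route: for~$\eta>0$ apply the approximation result of~\cite{DSV14} on the ball~$B_r$ to obtain~$u_\eta\in\H^s(\Omega)$ (in fact $s$-harmonic in a neighborhood of~$B_r$, hence in~$\Omega$ after noting the relevant domain is~$\Omega$ — here one uses that $s$-harmonicity is only required in~$\Omega$, and the construction in~\cite{DSV14} gives a function $s$-harmonic in~$B_r$ which can be taken to be, or modified to be, defined and continuous on all of~$\R^n$) with~$\|u_\eta-\psi\|_{L^\infty(B_r)}\le\eta$, where~$\psi$ is a fixed smooth function with~$\psi(0)=0$, $\psi=-1$ on~$\Omega\setminus B_{r}$ up to a set of small~$\mu_r$-measure, and~$|\psi|\le1$; after a normalization~$u_\eta/(1+\eta)$ to enforce the constraint~$\|u\|_{L^\infty(\Omega)}\le1$, plug into~\eqref{KEYIDENTITYplan}. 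The resulting lower bound is~${\mathcal{G}}_r(\Omega)\ge \frac{1}{1+\eta}\bigl(\mu_r(\Omega\setminus B_r)-o(1)\bigr)$, and letting~$\eta\to0$ gives~${\mathcal{G}}_r(\Omega)\ge \frac{1}{C}\mu_r(\Omega\setminus B_r)$ for a universal~$C$.

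The main obstacle, which deserves care, is the interplay between the~$L^\infty$ constraint in~\eqref{Evisd923hhdgur945} — imposed only on~$\Omega$ — and the fact that~\eqref{KEYIDENTITYplan} integrates~$u$ over~$\Omega\setminus B_r\subset\Omega$, so the constraint does control the relevant values; what is delicate is that the approximating function from~\cite{DSV14} is only guaranteed to be close to~$\psi$ on~$B_r$ (or a slightly larger ball), not on all of~$\Omega$, so one must first prescribe~$\psi$ on a ball~$B_R$ with~$\Omega\subset\subset B_R$, approximate on~$B_R$, and only afterwards restrict; controlling~$\|u_\eta\|_{L^\infty(\Omega)}\le\|u_\eta\|_{L^\infty(B_R)}\le\|\psi\|_{L^\infty(B_R)}+\eta\le1+\eta$ then closes the estimate. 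A secondary technical point is ensuring~$u_\eta\in C(\R^n)$ and that its tails satisfy the integrability in~\eqref{SPAZ}; this is handled by taking the~\cite{DSV14}-approximant compactly supported (or with controlled decay) outside~$B_R$, which is permissible since~$s$-harmonicity is only required inside~$\Omega\subset B_R$. Finally, one should double-check that~$\mu_r(\Omega\setminus B_r)$ is finite and that the "small~$\mu_r$-measure" error in choosing~$\psi=-1$ on~$\Omega\setminus B_r$ can genuinely be made~$o(1)$ relative to~$\mu_r(\Omega\setminus B_r)$ uniformly — if~$\mu_r(\Omega\setminus B_r)$ is itself tiny this requires taking~$\psi$ adapted to~$\Omega$, so strictly the construction of~$\psi$ and~$\eta$ should be interleaved with a covering/exhaustion of~$\Omega\setminus B_r$; this is the one spot where the argument is more than routine.
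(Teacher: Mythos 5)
Your argument is correct and follows essentially the same route as the paper: the same algebraic identity reducing the gap to $\int_{\Omega\setminus B_r}\bigl(u(y)-u(0)\bigr)\,d\mu_r(y)$, the bound $|u(y)-u(0)|\le2$ for the upper estimate, and a \cite{DSV14}-approximation of a cutoff vanishing at the origin for the lower one. The only worry you flag at the end is unfounded: since $\Omega\setminus B_r\subseteq\Co B_r$, you may take $\psi=0$ on $B_{r/2}$ and $\psi\equiv-1$ on $\Co B_{3r/4}$, with the transition region entirely inside $B_r$, so that $\psi=-1$ \emph{exactly} on all of $\Omega\setminus B_r$ and no covering or exhaustion is needed --- this is precisely what the paper does with its function $f_\delta$ (taking the value $1/2$ in place of $-1$ so that the $\eps$-perturbed approximant stays admissible without renormalization).
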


The proofs of these different stability theorems are the content of Section \ref{due}.

\subsection{A fractional inverse
mean value property involving a Poisson-like kernel}
We now investigate a different approach to the inverse
of fractional mean value properties, 
by considering a family of  
kernels
modeled on the fractional Poisson kernel. 
This family of  
kernels is parameterized by the set~$\Omega$; the kernels take into account the distance with respect to the boundary of~$\Omega$,
in a way that mimics the weight provided in the Poisson kernel
(depending on the distance to
the boundary of the domain). The reader can find
in Appendix B some notes on the Poisson kernel.

In the forthcoming results, we consider
only sets~$\Omega$ with~$C^{1,1}$ boundary, to fall within the cases studied in~\cite{MR1687746}.
We will also use the following notation for boundary limit: given a function~$f$ and a point $p\in \partial \Omega$, we write that
	\eqlab{\label{boundlim}
		 \lim_{{q\in{\mathcal{C}}\Omega}\atop{q\to p}}f(q)=\ell	
	}
if all the sequences accessing the boundary point in the exterior normal direction
approach the value~$\ell$, that is 
$$ \lim_{t\to0^+}f(p+t\nu(p))=\ell,$$
being~$\nu(p)$ the external normal of~$\Omega$ at~$p$
(other notions of non-normal limits
may be considered as well, see e.g. 
Theorem~3.2 in~\cite{MR1980119}).
\smallskip

We consider a function $\mathfrak F\colon (0,\infty)\times(0,\infty)\to \R_+$, and let $F_{\Omega} \colon \Co \overline \Omega \to \R_+$ be such that 
	\begin{equation}\label{FOME}
		F_\Omega(y): = \mathfrak c(\Omega)\; \mathfrak F\left( |y|, \dist(y,\partial \Omega)\right),
	\end{equation}
	where
	\begin{equation}\label{54327yYAS:03eorfijhh6778S}
		 \mathfrak c(\Omega) := \int_{\Co \Omega} F_\Omega(y) \, dy.
	\end{equation}
As a concrete example of the setting in~\eqref{FOME}, one could take 
	\eqlab{ \label{hop3}
		F_\Omega(y):=
\frac{c(n,s)}{|y|^n \,\big(
{\rm{dist}}(y, \partial \Omega)\big)^s\,\big(2+{\rm{dist}}(y, \partial \Omega)\big)^s},
	}
	with~$c(n,s)$ as in~\eqref{musta1}
and~\eqref{998y99iuqhjss89}. We observe that
the choice in~\eqref{hop3}
is consistent with the Poisson kernel on the ball:
namely,  if~$y\in{\mathcal{C}}B_1$, then~${\rm{dist}}(y, \partial B_1)=|y|-1$,
whence,
in~${\mathcal{C}}B_1$,
\begin{equation}\label{Nodnsckq2hwyrfy8w}
	F_{B_1}(y) =
		\frac{c(n,s)}{|y|^n \,\big(
		{\rm{dist}}(y, \partial B_1)\big)^s\,\big(2+{\rm{dist}}(y, \partial B_1)\big)^s}=
		\frac{c(n,s)}{|y|^n \,(
		|y|^2-1)^s}
		= P_{B_1 }(0,y) 
		,\end{equation}
thanks to \eqref{ballpoi}.\medskip

 In the framework described
in~\eqref{FOME}  we have the following result.

\begin{theorem}\label{DERe99}
Let $\Omega \subset \Rn$ be an open set with~$C^{1,1}$ boundary,
containing the origin, such that
	\begin{equation*} 
			{\rm{dist}}(0, \partial \Omega)=1,
	\end{equation*} 
	and let $p\in \partial \Omega \cap \partial B_1$.
Let $F_{\Omega} \colon \Co \overline \Omega \to \R_+$ be as in~\eqref{FOME} and assume that 
	\eqlab{\label{fru3}
		\lim_{{q\in{\mathcal{C}}\Omega}\atop{q\to p}} \frac{1}{\mathfrak c(\Omega)} F_\Omega(q) |q|^n \left(\dist(q,\partial \Omega)\right)^s= \frac{c(n,s)}{2^s}.
	}
Suppose that
	\begin{equation}\label{OMEGA-OME299}
		u(0)=\frac{1}{\mathfrak c(\Omega)} \int_{\Co \Omega} u(y) F_\Omega  (y) \, dy
	\end{equation}
for all functions~$u\in\mathcal H^s(\Omega)$. 
	Then
	\[ \Omega=B_1.\]
\end{theorem}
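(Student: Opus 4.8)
The plan is to mimic the nonlocal version of Kuran's argument used for Theorem~\ref{THM}, replacing the role played there by the exterior measure~$\mu_r$ with the Poisson-like density~$F_\Omega$, and exploiting the main result of~\cite{DSV14} that any smooth function is locally $s$-harmonic up to a small error. First I would argue by contradiction: suppose~$\Omega\ne B_1$. Since~$B_1\subset\Omega$ and~$\dist(0,\partial\Omega)=1$, the set~$\Omega\setminus B_1$ is a nonempty open set, and one can find a point~$\tilde x$ and a small ball~$B_\delta(\tilde x)\subset\Omega\setminus\overline{B_1}$ (here one must be slightly careful that such a ball exists in the open set~$\Omega\setminus\overline{B_1}$; if~$\Omega\setminus\overline{B_1}$ were empty then, by the~$C^{1,1}$ regularity of~$\partial\Omega$ and the fact that~$p\in\partial\Omega\cap\partial B_1$, one already gets~$\Omega=B_1$). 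The idea is then to build, via~\cite{DSV14}, a function~$u\in\mathcal H^s(\Omega)$ that is positive and ``large'' on~$B_\delta(\tilde x)$, small on the rest of~$\Co\Omega$, and with~$u(0)$ controlled, so that the two sides of~\eqref{OMEGA-OME299} cannot match.

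More precisely, the key step is the construction of a suitable competitor. Fix a smooth function~$\varphi$ supported in~$B_\delta(\tilde x)$ with~$\varphi\ge0$ and~$\varphi(\tilde x)=1$, and~$\varphi(0)$ arbitrary (say~$\varphi$ vanishing near the origin, so that~$\varphi(0)=0$). By the approximation theorem in~\cite{DSV14}, for any~$\eps>0$ there exists~$u=u_\eps\in C(\Rn)$, $s$-harmonic in a neighbourhood of~$\overline{B_1}$ (hence in~$\Omega$ after a harmless rescaling, or directly in~$\Omega$ after arranging the geometry), with~$\|u-\varphi\|_{L^\infty(B_1\cup B_\delta(\tilde x))}\le\eps$ and with suitable decay so that~$u\in\mathcal H^s(\Omega)$. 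Plugging such~$u$ into~\eqref{OMEGA-OME299} one has, up to errors of order~$\eps$ (and using~\eqref{998y99iuqhjss89}-type normalizations together with~\eqref{fru3} to control the tail integrals against~$F_\Omega$),
\[
u(0)=\frac{1}{\mathfrak c(\Omega)}\int_{\Co\Omega}u(y)F_\Omega(y)\,dy
\;\ge\; \frac{1}{\mathfrak c(\Omega)}\int_{B_\delta(\tilde x)}\big(\varphi(y)-\eps\big)F_\Omega(y)\,dy-C\eps,
\]
while~$u(0)\le\varphi(0)+\eps=\eps$. Since~$F_\Omega>0$ on the open set~$B_\delta(\tilde x)\subset\Co\Omega$, the integral~$\int_{B_\delta(\tilde x)}\varphi\,F_\Omega>0$ is a fixed positive number independent of~$\eps$; letting~$\eps\to0$ yields a contradiction. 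This forces~$\Omega\setminus\overline{B_1}=\varnothing$, and then~$\Omega=B_1$.

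There are two points that require care, and I expect the genuine obstacle to be the second. First, one must make sure that the function produced by~\cite{DSV14} can be taken $s$-harmonic in all of~$\Omega$ and not merely in a small ball; this is handled by the standard rescaling/covering trick already used for Theorem~\ref{THM} (approximate on a large ball containing~$\Omega$ and rescale so that the region of $s$-harmonicity swallows~$\Omega$), at the cost of also rescaling the domain, so strictly speaking one proves the scaling-invariant statement. Second, and more delicate, is the role of hypothesis~\eqref{fru3}: it is precisely what guarantees that~$F_\Omega$, normalized by~$\mathfrak c(\Omega)$, behaves near the boundary point~$p\in\partial\Omega\cap\partial B_1$ like the ball's Poisson kernel~$P_{B_1}(0,\cdot)$ does near~$\partial B_1$ (compare~\eqref{Nodnsckq2hwyrfy8w}), so that the exterior integral against~$F_\Omega/\mathfrak c(\Omega)$ is a genuine probability-type average consistent with~\eqref{mvp}. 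Quantitatively, the subtle step will be showing that, because of~\eqref{fru3} and the $C^{1,1}$ regularity, the difference between~$\frac{1}{\mathfrak c(\Omega)}\int_{\Co\Omega}u\,F_\Omega$ and the ``true'' fractional average~$\int_{\Co B_1}u\,d\mu_1$ is controlled in terms of the approximation error and of~$\mu_1(\Omega\setminus B_1)$, so that if~$\Omega\ne B_1$ the competitor~$u$ detects the region~$\Omega\setminus\overline{B_1}$ with a non-infinitesimal signal. Getting the boundary-behaviour hypothesis~\eqref{fru3} to interact correctly with the approximation argument — rather than merely with an exact $s$-harmonic function — is where the proof must be argued most carefully.
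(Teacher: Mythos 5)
There is a genuine gap, and it is fatal to the strategy. Your contradiction hinges on the displayed lower bound
\begin{equation*}
\frac{1}{\mathfrak c(\Omega)}\int_{\Co\Omega}u(y)F_\Omega(y)\,dy\;\ge\;\frac{1}{\mathfrak c(\Omega)}\int_{B_\delta(\tilde x)}\big(\varphi(y)-\eps\big)F_\Omega(y)\,dy-C\eps,
\end{equation*}
but the bump $\varphi$ is supported in $B_\delta(\tilde x)\subset\Omega\setminus\overline{B_1}$, which is a subset of $\Omega$, not of $\Co\Omega$: the ball $B_\delta(\tilde x)$ is disjoint from the domain of integration, and $F_\Omega$ is not even defined there (by \eqref{FOME} it lives on $\Co\overline\Omega$). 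So the hypothesis \eqref{OMEGA-OME299} is completely blind to your bump and no contradiction arises. Nor can the argument be repaired by using \eqref{mvp} on $B_1$ to force $u(0)$ to be bounded below while \eqref{OMEGA-OME299} forces it to be $O(\eps)$: the function produced by \cite{DSV14} is close to $\varphi$ only on a bounded set $B_R$, and it \emph{must} carry a non-small (negative) mass in $\Co B_R$ — otherwise \eqref{mvp} and the exact Poisson representation \eqref{DEF:POISSONKE} would contradict each other for this very $u$ — so the tail integrals $\int_{\Co B_R}u\,F_\Omega$ and $\int_{\Co B_R}u\,d\mu_1$ are not $O(\eps)$ and cannot be ``controlled by \eqref{fru3}'', which is a limit at the single boundary point $p$ and constrains $F_\Omega$ nowhere else. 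A decisive sanity check: a bump argument of this kind would never use \eqref{fru3} and would therefore prove the conclusion from \eqref{OMEGA-OME299} alone; but \eqref{OMEGA-OME299} alone is equivalent to $F_\Omega/\mathfrak c(\Omega)=P_\Omega(0,\cdot)$ a.e.\ in $\Co\Omega$, a condition compatible with many domains other than $B_1$ — this is exactly the ``balayage'' caveat discussed after Theorem~\ref{THM}.

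The correct route (and the paper's) is the opposite of a bump construction. Since \eqref{OMEGA-OME299} holds for \emph{all} $u\in\H^s(\Omega)$, test it against the $s$-harmonic extension of every $\phi\in C^\infty_0(\Co\Omega)$ and compare with \eqref{DEF:POISSONKE}: this identifies $F_\Omega(y)/\mathfrak c(\Omega)=P_\Omega(0,y)$ for a.e.\ $y\in\Co\Omega$. Hypothesis \eqref{fru3} then becomes precisely the boundary-limit condition \eqref{uiUiuyOidDFFoi845} on the Poisson kernel at $p$, and Theorem~\ref{2DERe} — whose proof compares $P_\Omega(0,\cdot)$ with $P_{B_1}(0,\cdot)$ near $p$ and uses the sharp lower bounds of \cite{MR1687746} to detect a putative region $\Omega\setminus B_1$ — yields $\Omega=B_1$. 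The approximation theorem of \cite{DSV14} is the right tool for Theorem~\ref{THM}, where the same measure $\mu_r$ appears on both sides of the identity so that the far-field contributions cancel; here they do not.
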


We make a few remarks on Theorem \ref{DERe99}. It is well known (see \eqref{DEF:POISSONKE}) that every $s$-harmonic function in $\Omega$ is uniquely determined by $P_\Omega$, the Poisson kernel in that domain. Basically (and we make this rigorous in the proof of Theorem~\ref{DERe99}), \eqref{OMEGA-OME299} is equivalent to asking for all $q\in \Co \overline \Omega$ that
	\eqlab{ \label{hop1}
		P_\Omega(0,q) \;\mathfrak c(\Omega)= F_\Omega(q).
	}
	Then \eqref{fru3} translates into 
	\eqlab{ \label{hop4}
			\lim_{{q\in{\mathcal{C}}\Omega}\atop{q\to p}} P_\Omega(0,q) |q|^n \left(\dist(q,\partial \Omega)\right)^s=\frac{c(n,s)}{2^s}.
	}
	 Furthermore, recalling the explicit formula for the Poisson kernel
on the ball in \eqref{ballpoi}, we have that
$$ \lim_{{q\in{\mathcal{C}}\Omega}\atop{q\to p}} P_{B_1}(0,q) |q|^n \left(\dist(q,\partial B_1)\right)^s=\frac{c(n,s)}{2^s},$$ hence
we can rewrite~\eqref{hop4} as
	  \eqlab{ \label{hop2}
			\lim_{{q\in{\mathcal{C}}\Omega}\atop{q\to p}} P_\Omega(0,q) |q|^n \left(\dist(q,\partial \Omega)\right)^s
			=
			\lim_{{q\in{\mathcal{C}}\Omega}\atop{q\to p}} P_{B_1}(0,q) |q|^n \left(\dist(q,\partial B_1)\right)^s.
	}
In view of these observations, we can deduce
Theorem~\ref{DERe99} from the following result:

\begin{theorem}\label{2DERe}
If~$\Omega \subset \Rn$ is an open set with~$C^{1,1}$ boundary,
containing the origin,
with~${\rm{dist}}(0, \partial \Omega)=1$ and $p\in \partial \Omega \cap \partial B_1$,
such that
\begin{equation}\label{uiUiuyOidDFFoi845} 
\lim_{{q\in{\mathcal{C}}\Omega}\atop{q\to p}}
P_\Omega(0,q)\,|q|^n\big(
{\rm{dist}}(q, \partial \Omega)\big)^s
=\frac{c(n,s)}{2^s},\end{equation}
then~$\Omega=B_1$.
\end{theorem}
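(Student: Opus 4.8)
The plan is to combine the explicit Poisson kernel of the ball with the domain--monotonicity of the fractional Poisson kernel and the sharp boundary estimates of~\cite{MR1687746}. The guiding idea is that the constant~$c(n,s)/2^s$ prescribed in~\eqref{uiUiuyOidDFFoi845} is precisely the boundary limit produced by~$B_1$ (compare~\eqref{hop2}), so~\eqref{uiUiuyOidDFFoi845} will force a manifestly nonnegative ``correction term'' to vanish at a prescribed rate, which cannot happen unless~$\Omega\setminus\overline{B_1}$ is empty. As preliminary reductions, from~$\dist(0,\partial\Omega)=1$ we get~$B_1\subseteq\Omega$; since moreover~$p\in\partial B_1\cap\partial\Omega$ and~$\partial\Omega\in C^{1,1}$, the tangent hyperplanes of~$\partial B_1$ and~$\partial\Omega$ at~$p$ coincide (as~$\partial B_1$ lies inside~$\overline\Omega$ and touches~$\partial\Omega$ at~$p$), hence the outer unit normal of~$\Omega$ at~$p$ is~$p$; and by~$C^{1,1}$ regularity (equivalently, the uniform exterior ball condition at~$p$) the point~$q_t:=p+tp$ satisfies~$\dist(q_t,\partial\Omega)=t$ for all small~$t>0$, while trivially~$\dist(q_t,\partial B_1)=|q_t|-1=t$ and~$|q_t|=1+t$. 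By the convention~\eqref{boundlim}, the limit in~\eqref{uiUiuyOidDFFoi845} is taken exactly along~$q=q_t$, $t\to0^+$. Finally, if~$\Omega\neq B_1$ then, $\Omega$ being open with~$B_1\subseteq\Omega$, the inclusion~$\Omega\subseteq\overline{B_1}$ is impossible (it would give~$\Omega=\mathrm{int}(\Omega)\subseteq\mathrm{int}(\overline{B_1})=B_1$), so~$\Omega\setminus\overline{B_1}$ is a nonempty open set and we may fix a closed ball~$\overline{B_\eta(\bar z)}\subseteq\Omega\setminus\overline{B_1}$.

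The first analytic ingredient is a Poisson kernel identity: since~$B_1\subseteq\Omega$, for every~$q\in\Co\overline\Omega$,
\[
P_\Omega(0,q)=P_{B_1}(0,q)+\int_{\Omega\setminus B_1}P_{B_1}(0,z)\,P_\Omega(z,q)\,dz .
\]
This is the first--exit decomposition of the rotationally symmetric~$2s$--stable process (classical fractional potential theory, see e.g.~\cite{MR1438304, bogdan1}); alternatively it can be derived by testing against an arbitrary nonnegative exterior datum~$f$ supported away from~$\overline\Omega$, using the mean value property~\eqref{mvp} on~$B_1$ for the corresponding~$u\in\H^s(\Omega)$, splitting~$\Co B_1$ into~$\Co\overline\Omega$ and~$\Omega\setminus B_1$ up to a null set, inserting the Poisson representations of~$u$ on~$\Omega$ and the identity~$u=f$ on~$\Co\overline\Omega$, and exchanging the order of integration (legitimate, as all integrands are nonnegative). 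In particular~$P_\Omega(0,q)\ge P_{B_1}(0,q)$, and the correction term is nonnegative.

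Inserting~$q=q_t$: since~$P_{B_1}(0,q)=c(n,s)\,|q|^{-n}(|q|^2-1)^{-s}$ by~\eqref{Nodnsckq2hwyrfy8w} and~$|q_t|^2-1=t(2+t)$, we get~$P_{B_1}(0,q_t)\,|q_t|^n\,t^s=c(n,s)(2+t)^{-s}$, which tends to~$c(n,s)/2^s$ as~$t\to0^+$. On the other hand~$\dist(q_t,\partial\Omega)=t$, so~\eqref{uiUiuyOidDFFoi845} says~$P_\Omega(0,q_t)\,|q_t|^n\,t^s\to c(n,s)/2^s$. Subtracting, and using the identity above,
\[
\lim_{t\to0^+}\, |q_t|^n\,t^s\int_{\Omega\setminus B_1}P_{B_1}(0,z)\,P_\Omega(z,q_t)\,dz=0 .
\]
Now suppose, by contradiction, that~$\Omega\neq B_1$, and take~$\overline{B_\eta(\bar z)}\subseteq\Omega\setminus\overline{B_1}$ as above; there the continuous positive function~$P_{B_1}(0,\cdot)$ is bounded below by some~$a_0>0$. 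By the two--sided Poisson kernel estimate for bounded~$C^{1,1}$ open sets in~\cite{MR1687746} --- according to which~$P_\Omega(z,q)$ is comparable to~$\dist(z,\partial\Omega)^s\,\dist(q,\partial\Omega)^{-s}\,(1+\dist(q,\partial\Omega))^{-s}\,|z-q|^{-n}$ --- there is~$c_K>0$, depending only on~$\overline{B_\eta(\bar z)}$ and~$\Omega$, such that~$P_\Omega(z,q_t)\ge c_K\,t^{-s}$ for every~$z\in\overline{B_\eta(\bar z)}$ and every small~$t>0$ (indeed~$\dist(z,\partial\Omega)$, $|z-q_t|$ and~$1+\dist(q_t,\partial\Omega)$ are bounded above and below uniformly, whereas~$\dist(q_t,\partial\Omega)=t$). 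Therefore, for all small~$t>0$,
\[
|q_t|^n\,t^s\int_{\Omega\setminus B_1}P_{B_1}(0,z)\,P_\Omega(z,q_t)\,dz\ \ge\ |q_t|^n\,t^s\,a_0\!\int_{B_\eta(\bar z)}\!c_K\,t^{-s}\,dz\ =\ a_0\,c_K\,|B_\eta(\bar z)|\,|q_t|^n\ \ge\ \tfrac12\,a_0\,c_K\,|B_\eta(\bar z)|>0 ,
\]
contradicting the previous display. Hence~$\Omega\setminus\overline{B_1}=\varnothing$, and since~$\Omega$ is open with~$B_1\subseteq\Omega$ this forces~$\Omega=B_1$.

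The substantive inputs are the Poisson kernel identity (resting on the strong Markov property / fractional potential theory) and, above all, the uniform lower bound~$P_\Omega(z,q_t)\gtrsim t^{-s}$ on compact subsets of~$\Omega$ as~$q_t\to p$, for which the~$C^{1,1}$ assumption is essential --- it is exactly what brings~\cite{MR1687746} into play; everything else is elementary. A slightly leaner route uses only the monotonicity inequality~$P_\Omega(0,q)\ge P_{B_1}(0,q)+\int_{B_\eta(\bar z)}P_{B_1}(0,z)\,P_\Omega(z,q)\,dz$, which is all the argument truly needs.
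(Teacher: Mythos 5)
Your proposal is correct and follows essentially the same route as the paper's proof: the same decomposition $P_\Omega(0,q)=P_{B_1}(0,q)+\int_{\Omega\setminus B_1}P_{B_1}(0,z)P_\Omega(z,q)\,dz$ obtained from the Poisson representation on $B_1$, the same computation with the explicit ball kernel showing the hypothesis forces the correction term times $t^s$ to vanish, and the same contradiction via a small ball in $\Omega\setminus\overline{B_1}$ together with the lower Poisson kernel bound of order $\dist(q,\partial\Omega)^{-s}$ from~\cite{MR1687746}. Your preliminary verification that $\dist(q_t,\partial\Omega)=t$ via the exterior ball condition is a welcome extra detail that the paper treats more briefly.
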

	
	\smallskip
	
	We point out that, in general, it may happen that
	\eqlab{ \label{OMEGA-OME2-73485}
		\lim_{{q\in{\mathcal{C}}\Omega}\atop{q\to \partial \Omega}} P_\Omega(0,q) |q|^n \left(\dist(q,\partial \Omega)\right)^s = C(n,s,\Omega),
	}
i.e. the limit towards any point of the boundary
may be a constant (depending on $\Omega$), without $\Omega$ being necessarily the unit ball
centered at the origin: for example,
given~$R>1$ and
$x_0\in \partial B_{R-1}$, we consider the domain $\Omega:=
B_R(x_0) $. Notice that $ 0\in B_R(x_0)$ and  that~${\rm dist}(0,\partial B_R(x_0))=R-|x_0|=1$. 
 However, given~$p\in\partial B_R(x_0)$,
taking~$q_t:=p+\frac{t(p-x_0)}{|p-x_0|}$, we have that
\begin{eqnarray*}&&\lim_{t\to0^+}
P_{B_R(x_0)}(0,q_t)\,|q_t|^n\,\big({\rm{dist}}(q_t, \partial B_R(x_0))\big)^s
\\ &&=\lim_{t\to0^+}\frac{c(n,s)\,t^s\,(R^2-|x_0|^2)^s}{(|q_t-x_0|^2-R^2)^s}=\lim_{t\to0^+}\frac{c(n,s)\,t^s\,(2R-1)^s}{((R+t)^2-R^2)^s}\\&&\qquad=\frac{c(n,s)\,(2R-1)^s}{2^s\,R^s},
\end{eqnarray*}
hence this limit is the same for all~$p\in\partial B_R(x_0)$.

It would be interesting to further investigate the geometric implications
of condition~\eqref{OMEGA-OME2-73485}, to
relate boundary limits 
to geometric
properties of the domain
and to classify all the domains 
for which~\eqref{OMEGA-OME2-73485}
holds true. Also, it would be nice to investigate
the specific role played by the Euclidean norm
and understand the case of different norms, including anisotropic
situations and Minkowski norms induced by a convex domain.

\medskip

Concerning the quantity~$\mathfrak c(\Omega)$
in~\eqref{54327yYAS:03eorfijhh6778S} and the setting in \eqref{hop3}, we have the next result.

\begin{lemma}\label{JKMS:0-2rfiugj}
Let~$\Omega$ be a bounded domain with~$C^{1,1}$
boundary such that~${\rm dist}(0,\partial\Omega)=1$
and~$\Omega\subseteq B_R$, and let~$F_\Omega$ be as
in~\eqref{hop3}. Suppose that~\eqref{OMEGA-OME299}
holds true for all functions~$u\in\mathcal H^s(\Omega)$. 

Then
\begin{equation}\label{7768cnsnamcppBB}
\mathfrak c(\Omega)\in\left[\frac{1}{R^{2s}},1\right].
\end{equation}
Moreover, 
\begin{equation}\label{OMABXpal}
		\mathfrak c(\Omega)=1\quad {\mbox{ if and only if}}\quad	
\Omega=B_1.\end{equation}
	\end{lemma}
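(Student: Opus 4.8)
The plan is to start from the reformulation of~\eqref{OMEGA-OME299} recorded in~\eqref{hop1}: as established in the proof of Theorem~\ref{DERe99}, requiring~\eqref{OMEGA-OME299} for all~$u\in\mathcal H^s(\Omega)$ is equivalent to the pointwise identity~$\mathfrak c(\Omega)\,P_\Omega(0,q)=F_\Omega(q)$ for every~$q\in\Co\overline\Omega$, with~$F_\Omega$ as in~\eqref{hop3}. Hence~$\mathfrak c(\Omega)=F_\Omega(q)/P_\Omega(0,q)$ for every~$q\in\Co\overline\Omega$, and the whole game is to evaluate this ratio at well chosen points~$q$ and to compare~$P_\Omega(0,q)$ with an explicit ball Poisson kernel. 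I will use two standing facts: \emph{(i)} monotonicity of the Poisson kernel under inclusion, i.e. if~$\Omega_1\subseteq\Omega_2$ with~$0\in\Omega_1$ then~$P_{\Omega_1}(0,q)\le P_{\Omega_2}(0,q)$ for all~$q\in\Co\overline{\Omega_2}$ — a standard consequence of the comparison principle for~$(-\Delta)^s$ (equivalently, of the Green function representation of the Poisson kernel and~$G_{\Omega_1}\le G_{\Omega_2}$), see~\cite{bogdan1,MR1687746}; and \emph{(ii)} that, since~$B_1\subseteq\Omega\subseteq B_R$, for all~$q\in\Co\overline\Omega$ one has~$\dist(q,\partial\Omega)=\dist(q,\overline\Omega)\le\dist(q,\overline{B_1})=|q|-1$, together with the explicit kernels~$P_{B_\rho}(0,y)=c(n,s)\,\rho^{2s}\,|y|^{-n}(|y|^2-\rho^2)^{-s}$ on~$\Co\overline{B_\rho}$ (see~\eqref{musta1}, \eqref{mvp}, \eqref{ballpoi} and~\eqref{Nodnsckq2hwyrfy8w}).

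For the upper bound in~\eqref{7768cnsnamcppBB}, I pick~$p\in\partial\Omega\cap\partial B_1$, which exists since~$\dist(0,\partial\Omega)=1$ is attained on the compact set~$\partial\Omega$. Because~$B_1$ is internally tangent to~$\Omega$ at~$p$, the exterior unit normal of~$\Omega$ at~$p$ equals~$p$, so~$q_t:=(1+t)p$ lies in~$\Co\overline\Omega$ for small~$t>0$; moreover, by the uniform exterior ball condition provided by the~$C^{1,1}$ regularity, $p$ is the point of~$\partial\Omega$ nearest to~$q_t$, so~$\dist(q_t,\partial\Omega)=t$. Using~$|q_t|^2-1=t(2+t)$ and substituting into~\eqref{hop3} one gets~$F_\Omega(q_t)=c(n,s)(1+t)^{-n}t^{-s}(2+t)^{-s}=P_{B_1}(0,q_t)$, so fact~(i) with~$B_1\subseteq\Omega$ gives
\[ \mathfrak c(\Omega)=\frac{F_\Omega(q_t)}{P_\Omega(0,q_t)}=\frac{P_{B_1}(0,q_t)}{P_\Omega(0,q_t)}\le1 . \]
For the lower bound I let~$|q|\to\infty$ along any ray: for~$|q|>R$ one has~$q\in\Co\overline{B_R}$, hence fact~(i) with~$\Omega\subseteq B_R$ gives~$P_\Omega(0,q)\le P_{B_R}(0,q)$, while fact~(ii) and the monotonicity of~$t\mapsto t^s(2+t)^s$ give~$\big(\dist(q,\partial\Omega)\big)^s\big(2+\dist(q,\partial\Omega)\big)^s\le(|q|^2-1)^s$. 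Therefore
\[ \begin{aligned} \mathfrak c(\Omega)=\frac{F_\Omega(q)}{P_\Omega(0,q)}&\ge\frac{F_\Omega(q)}{P_{B_R}(0,q)}=\frac{(|q|^2-R^2)^s}{R^{2s}\,\big(\dist(q,\partial\Omega)\big)^s\big(2+\dist(q,\partial\Omega)\big)^s}\\ &\ge\frac{(|q|^2-R^2)^s}{R^{2s}(|q|^2-1)^s} , \end{aligned} \]
and letting~$|q|\to\infty$ yields~$\mathfrak c(\Omega)\ge R^{-2s}$; this proves~\eqref{7768cnsnamcppBB}.

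For~\eqref{OMABXpal}: if~$\Omega=B_1$, then by~\eqref{Nodnsckq2hwyrfy8w} and~\eqref{998y99iuqhjss89} we have~$\mathfrak c(B_1)=\int_{\Co B_1}F_{B_1}(y)\,dy=\int_{\Co B_1}P_{B_1}(0,y)\,dy=\mu_1(\Co B_1)=1$. Conversely, assume~$\mathfrak c(\Omega)=1$; then~\eqref{hop1} reads~$P_\Omega(0,q)=F_\Omega(q)$ for all~$q\in\Co\overline\Omega$, and for any~$p\in\partial\Omega\cap\partial B_1$ the formula~\eqref{hop3} gives
\[ \frac{1}{\mathfrak c(\Omega)}\,F_\Omega(q)\,|q|^n\big(\dist(q,\partial\Omega)\big)^s=\frac{c(n,s)}{\big(2+\dist(q,\partial\Omega)\big)^s} , \]
whose right-hand side tends to~$c(n,s)/2^s$ as~$q\to p$, since~$\dist(q,\partial\Omega)\to0$; thus assumption~\eqref{fru3} is satisfied and Theorem~\ref{DERe99} yields~$\Omega=B_1$. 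The two delicate points, beyond routine bookkeeping, are the boundary geometry in the upper-bound step — that~$\nu_\Omega(p)=p$ and~$\dist((1+t)p,\partial\Omega)=t$ for small~$t$, which is precisely where the~$C^{1,1}$ hypothesis is used — and the fact that the implication~``$\mathfrak c(\Omega)=1\Rightarrow\Omega=B_1$'' does not seem reachable from~\eqref{hop1} by an elementary comparison (one only re-derives~$\dist(q,\partial\Omega)\le|q|-1$), so it must be routed through Theorem~\ref{DERe99} via its assumption~\eqref{fru3}.
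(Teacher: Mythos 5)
Your proof is correct and follows essentially the same route as the paper: the pointwise identity $F_\Omega(q)=\mathfrak c(\Omega)P_\Omega(0,q)$, comparison with $P_{B_1}$ at a tangency point $p\in\partial\Omega\cap\partial B_1$ for the upper bound, comparison with $P_{B_R}$ for the lower bound, and the rigidity statement routed through Theorem~\ref{DERe99} (equivalently Theorem~\ref{2DERe}, which is how the paper phrases it). The only differences are cosmetic: you make explicit the limit $|q|\to\infty$ for the bound $\mathfrak c(\Omega)\ge R^{-2s}$, which the paper merely asserts, and you compute $\mathfrak c(B_1)=1$ by direct integration rather than via the equality case of the comparison inequality.
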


It is also interesting to observe that
the limit in~\eqref{OMEGA-OME2-73485}
always exists if the domain is~$C^{1,1}$.

\begin{theorem}\label{edcyhBoNo8549yt896549}
Let~$\Omega\subset\R^n$ be an open set with~$C^{1,1}$
boundary, with~$p\in\partial \Omega$ and let~$x_0\in\Omega$.
Then, the limit
\begin{equation*}
\lim_{t\to 0^+} P_\Omega(x_0,p+t\nu(p))\,\big( {\rm dist}(p+t\nu(p),\partial\Omega)\big)^s\end{equation*}
exists and it is finite.
\end{theorem}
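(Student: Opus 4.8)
\textbf{Proof plan for Theorem~\ref{edcyhBoNo8549yt896549}.}
The plan is to use the known sharp boundary asymptotics of the Poisson kernel for $C^{1,1}$ domains, together with the scale-invariance of the half-space Poisson kernel, to show that the rescaled quantity converges. First I would recall from~\cite{MR1687746} (and the related references~\cite{MR1654115, MR1980119, MR2006232}) the two-sided estimate
\[
P_\Omega(x_0,q)\asymp \frac{\big(\dist(x_0,\partial\Omega)\big)^s}{\big(\dist(q,\partial\Omega)\big)^s\,|q-x_0|^n}
\]
for $q$ near $\partial\Omega$, which already shows that the expression $P_\Omega(x_0,p+t\nu(p))\,\big(\dist(p+t\nu(p),\partial\Omega)\big)^s$ stays bounded between two positive constants as $t\to 0^+$; so the only issue is the existence of the limit, not finiteness.

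For the existence of the limit, the key step is a blow-up (zoom-in) argument at the boundary point $p$. I would flatten the boundary near $p$ via a $C^{1,1}$ diffeomorphism and rescale: set $\Omega_t := t^{-1}(\Omega - p)$, so that $\Omega_t$ converges, as $t\to 0^+$, to the half-space $\H_p := \{z : z\cdot\nu(p) < 0\}$ (the convergence being in a suitable local $C^{1,1}$, hence $C^1$, sense since $\partial\Omega$ is $C^{1,1}$). By the scaling property of the fractional Laplacian, $P_\Omega(x_0, p+t z) = t^{-n} P_{\Omega_t}\!\big(t^{-1}(x_0-p),\, z\big)$, and $\dist(p+tz,\partial\Omega) = t\,\dist(z,\partial\Omega_t)$. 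Plugging $z = \nu(p)$ gives
\[
P_\Omega(x_0,p+t\nu(p))\,\big(\dist(p+t\nu(p),\partial\Omega)\big)^s
= P_{\Omega_t}\!\big(t^{-1}(x_0-p),\,\nu(p)\big)\,\big(\dist(\nu(p),\partial\Omega_t)\big)^s\, t^{-n}.
\]
Now $t^{-1}(x_0-p)\to\infty$ along the direction $-\nu(p)$ (as $x_0\in\Omega$ and $p\in\partial\Omega$ force $(x_0-p)\cdot\nu(p)<0$), and the Poisson kernel of the half-space $\H_p$ has the explicit form
\[
P_{\H_p}(X,z)= \tilde c(n,s)\,\frac{\big((-X\cdot\nu(p))_+\big)^s}{\big((z\cdot\nu(p))_+\big)^s\,|X-z|^n},
\]
so that for $z=\nu(p)$ fixed and $|X|\to\infty$ along $-\nu(p)$ one has $P_{\H_p}(X,\nu(p))\sim \tilde c(n,s)\,|X|^{s-n}(-X\cdot\nu(p))^{-s}\cdot(\text{something})$; more precisely the right normalization makes $P_{\H_p}(X,\nu(p))\,|X|^{n}\to \tilde c(n,s)$ as $X\to\infty$ radially. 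Combining the scaling identity with (a) local uniform convergence of Poisson kernels under $C^1$ convergence of the domains, available from the estimates and interior regularity of $P$ in~\cite{MR1687746, MR1980119}, and (b) the explicit half-space limit, the product converges to a constant determined by $\tilde c(n,s)$ times a factor coming from $\dist(\nu(p),\partial\Omega_t)\to\dist(\nu(p),\partial\H_p)=1$ and from the rate at which $t^{-1}(x_0-p)\to\infty$; one checks the powers of $t$ cancel exactly because $P$ scales like $t^{-n}$ while $|t^{-1}(x_0-p)|^{-n}\sim t^{n}|x_0-p|^{-n}$, leaving a finite nonzero limit independent of how $x_0$ was chosen (the $x_0$-dependence disappearing in the limit up to the harmless factor $(\dist(x_0,\partial\Omega))^s$, which I would track carefully).

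The main obstacle I anticipate is making the convergence $P_{\Omega_t}\to P_{\H_p}$ rigorous and \emph{locally uniform} enough to pass to the limit in the product: the two-sided bound alone does not give convergence, so I would need an equicontinuity/compactness input — e.g. interior Hölder estimates for $s$-harmonic functions applied to $z\mapsto P_{\Omega_t}(X,z)$ away from $\partial\Omega_t$, plus the barrier-type boundary estimates of~\cite{MR1687746} to control the behavior as $z\to\nu(p)$ is approached — and then argue that any subsequential limit of $P_{\Omega_t}(X_t,\cdot)$ solves the Dirichlet problem in $\H_p$ with the appropriate singularity, hence equals $P_{\H_p}$ by uniqueness. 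An alternative, perhaps cleaner, route avoiding blow-up is to invoke directly a known boundary-regularity theorem for the Poisson kernel in $C^{1,1}$ domains — for instance the Martin-kernel/Poisson-kernel boundary Harnack and limit results in~\cite{MR1980119, MR2006232, MR2365478} — which typically yield precisely that $P_\Omega(x_0,q)\,\dist(q,\partial\Omega)^s$ extends continuously up to $\partial\Omega$ with a limit expressible via the Martin kernel; I would state this as the key cited ingredient and then the theorem follows immediately. I would present the blow-up argument as the conceptual proof and mention the citation-based shortcut as a remark.
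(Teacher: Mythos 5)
Your boundedness step is fine and coincides with the paper's starting point: the two\-/sided bounds of Theorem~2.13 in~\cite{MR1687746}, which the paper sharpens in Lemma~\ref{0.1lemma} by comparing with interior and exterior tangent balls and using the explicit kernel~\eqref{ballpoi}. The existence step, however, has a genuine gap. Your blow-up evaluates $P_{\Omega_t}$ at the point $X_t=t^{-1}(x_0-p)$, which escapes to infinity as $t\to0^+$, whereas the convergence $P_{\Omega_t}\to P_{\Pi_p}$ (with $\Pi_p$ the tangent half-space) that compactness and uniqueness arguments can deliver is only \emph{locally} uniform on compact sets; it gives no control at $X_t$. This is not a technicality that more care would fix: the value your scheme would produce is determined solely by the tangent half-space at $p$, namely a multiple of $\big((p-x_0)\cdot\nu(p)\big)^s\,|x_0-p|^{-n}$, and that value is wrong in general. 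Test it on $\Omega=B_R(x_c)$: by~\eqref{ballpoi} the true limit is $\frac{c(n,s)\,(R^2-|x_0-x_c|^2)^s}{2^s R^s\,|x_0-p|^n}$, while the tangent-half-space prediction is $\frac{c(n,s)\,((p-x_0)\cdot\nu(p))^s}{2^s\,|x_0-p|^n}$; these agree only when $R\,(p-x_0)\cdot\nu(p)=|x_0-p|^2$, which fails, for instance, whenever $x_0-x_c$ is a nonzero vector orthogonal to $\nu(p)$. (Relatedly, your claim that $P_{\Pi_p}(X,\nu(p))\,|X|^n\to\tilde c(n,s)$ as $X\to\infty$ is off: this quantity behaves like $\tilde c\,\big(\dist(X,\partial\Pi_p)\big)^s$ and diverges.) The limit in Theorem~\ref{edcyhBoNo8549yt896549} genuinely encodes global information about $\Omega$ and $x_0$, so no purely local blow-up at $p$ can identify it, and the $x_0$-dependence does not ``disappear up to a harmless factor.''

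The paper's argument circumvents exactly this obstruction. It introduces reference points $\bar x_\eta=\eta x_{\rm int}+(1-\eta)p$ sliding to $p$ along the interior normal, and observes via the \emph{existence of the Martin kernel} $M_\Omega^{x_0}(\bar x_\eta,p)=\lim_{y\to p}P_\Omega(\bar x_\eta,y)/P_\Omega(x_0,y)$ (the genuinely cited ingredient, from~\cite{MR2365478}) that the ratio $\limsup_{t\to0^+}\Psi(t)/\liminf_{t\to0^+}\Psi(t)$ for $\Psi(t)=P_\Omega(x_0,p+t\nu(p))t^s$ equals the analogous ratio with $x_0$ replaced by $\bar x_\eta$; the latter is then squeezed to $1$ by Lemma~\ref{0.1lemma}, because the tangent-ball upper and lower bounds do pinch as the observation point tends to $p$ along the normal. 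Your proposed ``citation shortcut'' --- that $P_\Omega(x_0,q)\,\big(\dist(q,\partial\Omega)\big)^s$ is already known to extend continuously to the boundary --- is in the right spirit (relative Fatou and Martin boundary theory, cf.~\cite{MR1980119}), but as stated it simply assumes the conclusion; the available input is the Martin-kernel limit, and an argument like the paper's is still needed to convert it into existence of $\lim_{t\to0^+}\Psi(t)$.
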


The proofs of Theorems \ref{DERe99}, \ref{2DERe}, \ref{edcyhBoNo8549yt896549}  and of Lemma \ref{JKMS:0-2rfiugj}   
 are contained in Section \ref{tre}.

\section{Proofs of the main results}

In this section, we provide the proofs of the
main results of this note, 
 together with other auxiliary results.
\subsection{Proof of Theorem~\ref{THM}}\label{thm11}
\label{uno}
\begin{proof}[Proof of Theorem~\ref{THM}]
\label{TOWMSYY023}
We argue towards a contradiction,
assuming that
$\Omega \setminus B_r \neq \varnothing$.
Then, let~$p\in\Omega \setminus B_r$.
Since~$\Omega$ is open, there exists~$\rho>0$ such that~$B_\rho(p)\subset\Omega$.
Furthermore, one sees that, since~$p\not\in B_r$,
it holds that~$B_\rho(p)\setminus\overline{B_r}\ne\varnothing$.
These observations give that
$$\varnothing\ne B_\rho(p)\setminus\overline{B_r}\subset\Omega\setminus B_r,$$
and therefore, by~\eqref{musta1},
\begin{equation}\label{MISP}
\mu_r(\Omega\setminus B_r)>0.
\end{equation}
Moreover, according to~\eqref{mvp}
and~\eqref{OMEGA}, for any $u\in \mathcal H^s(\Omega)$ with~$u(0)=0$
we have that
	\eqlab{
		\label{eqn1}
		0\,&= \mu_r(\Co \Omega) \,u(0)=
\int_{\Co \Omega} u(y)\, d\mu_r(y) 
			=\int_{\Co B_r} u(y)\, d\mu_r(y)  -\int_{\Omega\setminus B_r} u(y)\, d\mu_r(y)
				\\&= \mu_r(\Co B_r) \,u(0)- \int_{\Omega\setminus B_r} u(y)\, d\mu_r(y)
	=- \int_{\Omega\setminus B_r} u(y)\, d\mu_r(y).}
Now, for every~$x\in\R^n$ we let~$f(x):=|x|^2$, and we define
	\[ R:= \max_{y\in \overline \Omega} |y|.\]  We also consider~$\epsilon>0$ suitably small,
possibly in dependence of~$r$. For concreteness,
we take  
\begin{equation*}
\epsilon:=\frac{r^2}{4}.
\end{equation*}
We exploit Theorem~1.1 in~\cite{DSV14} for this~$\eps$ to obtain the existence of a  function~$f_{r,R} \in C^s_0(\Rn)$ such that
\begin{eqnarray*}&& (-\Delta)^s f_{r,R}=0 \quad{\mbox{ in }}B_{R},\\
{\mbox{and }}&& \| f_{r,R}-f\|_{L^\infty(B_{R})}\le\epsilon=\frac{r^2}4.
\end{eqnarray*}
Then, we define
\eqlab{\label{ustar} u^\star(x):=-f_{r,R}(x)+f_{r,R}(0).}
We remark that, for all~$x\in B_{R}$,
\begin{equation*}
\begin{split}
u^\star(x)=&\;-f_{r,R}(x) + f(x) + f_{r,R}(0) - f(0) -f(x)+f(0)
 \\
\le &\;
-f(x)+f(0) + |f(x)-f_{r,R}(x)|
+|f_{r,R}(0)-f(0)|
\\ 
\le&\; 
-|x|^2+\frac{r^2}2.
\end{split}
\end{equation*}
Hence 
\bgs{ 
\label{Aer} - u^\star(x) \geq |x|^2-\frac{r^2}2 \geq \frac{r^2}2\qquad \quad \mbox{ for all } x\in B_{R}\setminus B_r.}
Since~$\Omega\subset B_R$,
it follows from this and~\eqref{MISP} 
that
\eqlab{ \label{ddd} &
	 \int_{\Omega\setminus B_r}-  u^\star(y)\,d\mu_r(y)
\geq 
\frac{r^2}2\,\mu_r(\Omega\setminus B_r) > 0 .
}
We also point out that~$u^\star(0)=0$ and~$(-\Delta)^s u^\star(x)=
(-\Delta)^s f_{r,R}(x)=0$ for all~$x\in B_{R}$,
 consequently~$u^\star\in{\mathcal{H}}^s(B_{R})\subset
{\mathcal{H}}^s(\Omega)$.

Hence, we can exploit~\eqref{eqn1} with~$u:=u^\star$,
obtaining a contradiction with~\eqref{ddd}.
\end{proof}

Now we present a structurally different proof of Theorem~\ref{THM}
based on fractional potential theory. This proof
is based on the following idea:
if we assume that $\Omega$ has $C^{1,1}$ boundary, using~\eqref{DEF:POISSONKE}
and~\eqref{ballpoi}, we obtain
	\[
	 \int_{\Co \Omega} u(y)\left( \frac{P_{B_r}(0,y)}{\mu_r(\Co \Omega)}- P_\Omega(0,y)\right) dy =0,
	 \] 
	 and this holds for any $u\in C^\infty_0(\Co \Omega)$. These considerations  give that 
		 \eqlab{ \label{intuit}
		 \frac{P_{B_r}(0,y)}{\mu_r(\Co \Omega)}= P_\Omega(0,y)
	  }
	 almost everywhere in $\Co \Omega$. The thesis of Theorem \ref{THM} is therefore equivalent to proving that
\eqref{intuit} holds if and only if $\Omega=B_r.$ 
	 	 The proof of this claim is carried out reasoning by contradiction. Intuitively, looking at \eqref{intuit}, the contradiction is obtained by choosing a point $p^* \in \partial \Omega$, and $p^* \notin \partial B_1$ and taking the limit for $y\in \Co \Omega$ to $p^*$: the left-hand side term will tend to infinity, whereas the right-hand side gives a finite value. 
	 	 
	 	 We note that in the proof we exploit an approximation argument
in order to deal with general domains. The detailed 
exposition follows hereafter.
	 
\begin{proof}[Potential theory proof of Theorem~\ref{THM}]\label{PAKLA1}
We argue by contradiction and suppose that~$\Omega\setminus B_r$
is nontrivial. Then, by sliding a ball inside~$\Omega\setminus B_r$,
we can find a ball~$B^*\subset\Omega\setminus\overline{B_r}$
with~$(\partial B^*)\cap((\partial\Omega)\setminus\overline{ B_r})
\ne\varnothing$, see Figure~\ref{fig:my-label}.
In this way, we can consider a point~$p^*\in
(\partial B^*)\cap((\partial\Omega)\setminus\overline{ B_r})$.
We also take a sequence~$p_j\in {\mathcal{C}}\Omega$ such that~$p_j\to p^*$
as~$j\to+\infty$.
We define~$\varpi:=B_r\cup {B^*}$. 

\begin{figure}[!htb]
        \center{\includegraphics[width=0.7\textwidth]
        {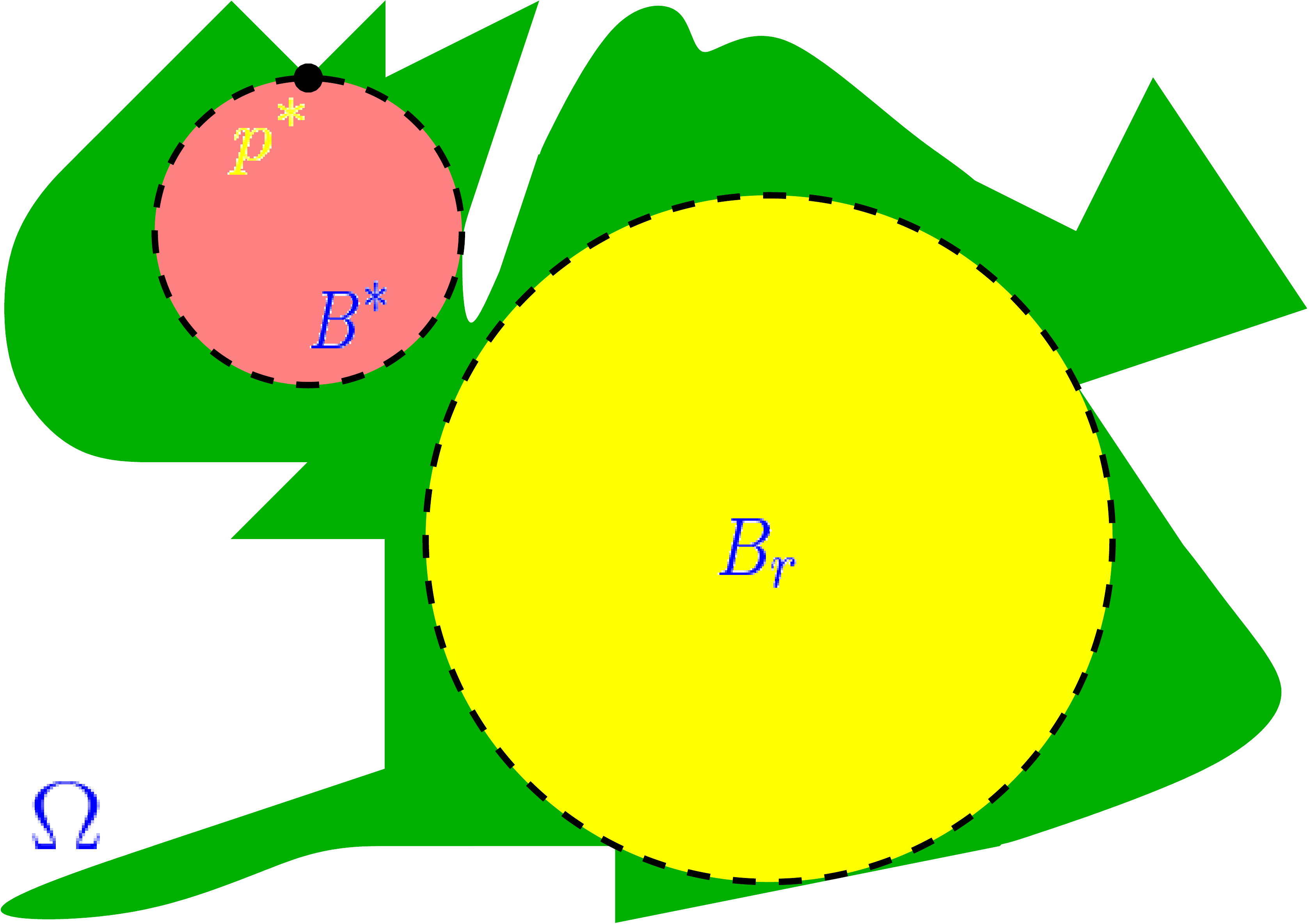}}
        \caption{\label{fig:my-label}\sl What happens if~$\Omega\setminus{B_r}\ne\varnothing$.}
      \end{figure}

The main idea now is to consider, as a test for our contradiction,
a harmonic function in~$\varpi$ formally corresponding to a Dirac mass at~$p_j$.
On the one hand,
this function will reproduce the Poisson kernel~$P_\varpi(\cdot,p_j)$
(as described in Appendix~\ref{POSKE}), which diverges as~$p_j\to p^*$;
on the other hand, the corresponding average in~\eqref{OMEGA}
would converge to a finite
value, thus providing the desired contradiction.

The details of the technical argument go as follows.
We take~$\varphi_{k,p_j}$ as in~\eqref{APPI2}
and thus~$u_{k,p_j}$ as in~\eqref{APPI}.
Given~$j$, we always suppose that~$k$ is large,
possibly in dependence of~$j$, such that~$B_{1/k}(p_j)$
lies well outside~$\Omega$, that is
\begin{equation}\label{4471}
\overline{B_{1/k}(p_j)}\subset {\mathcal{C}}\overline{\Omega}.
\end{equation}
Also, given~$\delta>0$, we take a smooth bounded open set~$\Omega^{(\delta)}$
that contains~$\Omega$ and such that all points of~$\Omega^{(\delta)}$
have distance less than~$\delta$ from~$\Omega$.

In view of~\eqref{4471},
we can take~$\delta$ sufficiently small (possibly in dependence
of~$k$ and~$j$), such that
\begin{equation}\label{44712}
\overline{B_{1/k}(p_j)}\subset {\mathcal{C}}\overline{\Omega^{(\delta)}}.
\end{equation}
We take~$u_{k,p_j,\delta}$ to be the fractional
harmonic function coinciding with~$\varphi_{k,p_j}$
outside~$\Omega^{(\delta)}$.

We claim that
\begin{equation}\label{43}
u_{k,p_j,\delta}\ge u_{k,p_j}.
\end{equation}
For this, we observe that~$u_{k,p_j,\delta}\ge0$, by Maximum Principle.
Hence, since~$u_{k,p_j}=\varphi_{k,p_j}=0$ in $({\mathcal{C}}\varpi)\cap
({\mathcal{C}}B_{1/k}(p_j))$, it follows that~\eqref{43} holds true at least
in~$({\mathcal{C}}\varpi)\cap({\mathcal{C}}\B_{1/k}(p_j))\supseteq
({\mathcal{C}}\varpi)\cap\Omega^{(\delta)}$.

This, and the fact that~\eqref{43} holds true by construction in~$
{\mathcal{C}}\Omega^{(\delta)}$, gives that \eqref{43}
holds in~${\mathcal{C}}\varpi$.
On the other hand, both~$u_{k,p_j,\delta}$
and~$u_{k,p_j}$ are $s$-harmonic in~$\varpi$, hence~\eqref{43}
follows from the Maximum Principle.

Now we claim that
\begin{equation}\label{P089p008}
\lim_{\delta\to0}
\int_{\Co \Omega} u_{k,p_j,\delta}(y) \, d\mu_r (y)=
\int_{\Co \Omega} \varphi_{k,p_j}(y) \, d\mu_r (y)
.\end{equation}
To this end, we observe that the image of~$ \varphi_{k,p_j}$
is~$[0,k]$, and therefore also the image of~$u_{k,p_j,\delta}$
is~$[0,k]$, by Maximum Principle. Then, since
\begin{eqnarray*}
&&\int_{\Co \Omega} u_{k,p_j,\delta}(y) \, d\mu_r (y)\\
&=&\int_{\Co \Omega^{(\delta)}} u_{k,p_j,\delta}(y) \, d\mu_r (y)+
\int_{\Omega^{(\delta)}\setminus \Omega} u_{k,p_j,\delta}(y) \, d\mu_r (y)\\
&=&\int_{\Co \Omega^{(\delta)}} \varphi_{k,p_j}(y) \, d\mu_r (y)+
\int_{\Omega^{(\delta)}\setminus \Omega} u_{k,p_j,\delta}(y) \, d\mu_r (y),
\end{eqnarray*}
one obtains~\eqref{P089p008} by taking the limit.

Therefore, in light of~\eqref{OMEGA}, \eqref{43} and~\eqref{P089p008},
\begin{equation*}
\begin{split}&
u_{k,p_j}(0)
\le
\lim_{\delta\to0}u_{k,p_j,\delta}(0)
=
\lim_{\delta\to0}\frac{1}{\mu_r(\Co \Omega)}
\int_{\Co \Omega} u_{k,p_j,\delta}(y) \, d\mu_r (y)\\&\qquad\qquad=\frac{1}{\mu_r(\Co \Omega)}
\int_{\Co \Omega} \varphi_{k,p_j}(y) \, d\mu_r (y).
\end{split}\end{equation*}
Hence,  
observing that~$0\in B_r\subset\varpi$,
taking limits as~$k\to+\infty$, and recalling
Lemmata~\ref{LE:PDformt} and~\ref{HA:odf45},
\begin{equation}\label{76860092344tuiuu38585}
\begin{split}&P_\varpi(0,p_j)=\lim_{k\to+\infty}
u_{k,p_j}(0)
\le\lim_{k\to+\infty}\frac{1}{\mu_r(\Co\Omega)}
\int_{\Co \Omega} \varphi_{k,p_j}(y) \, d\mu_r (y)\\
&\qquad\qquad
=\frac{	c(n,s)\,r^{2s}}{\mu_r(\Co \Omega)\,(|p_j|^2 -r^2)^s |p_j|^{n} }
.\end{split}\end{equation}
Now, we take limits as~$j\to+\infty$. To this end, we exploit
Theorem~2.13 in~\cite{MR1687746}, that provides a suitable~$c:=c(n,s,\varpi)>0$ such that
$$ P_\varpi(0,p_j)\ge
\frac{c\,\big({\rm{dist}}(0, \partial \varpi)\big)^s}{
\big({\rm{dist}}(p_j, \partial \varpi)\big)^s\,
\big( 1+{\rm{dist}}(p_j, \partial \varpi)\big)^s\,
|p_j|^n}.$$
Hence, since~$p_j\to p^*\in\partial B^*\subseteq\partial\varpi$,
and consequently~${\rm{dist}}(p_j, \partial \varpi)\to0$ as~$j\to+\infty$,
we obtain that
\begin{equation}\label{STAH:001} \lim_{j\to+\infty}P_\varpi(0,p_j)=+\infty.\end{equation}
Plugging this information into~\eqref{76860092344tuiuu38585},
we conclude that
\begin{equation}\label{STAH:002}
+\infty=\lim_{j\to+\infty}
\frac{	c(n,s)\,r^{2s}}{\mu_r(\Co \Omega)\,(|p_j|^2 -r^2)^s |p_j|^{n} }=
\frac{	c(n,s)\,r^{2s}}{\mu_r(\Co \Omega)\,(|p^*|^2 -r^2)^s |p^*|^{n} }.\end{equation}
But
\begin{equation}\label{STAH:003}
\frac{	1}{(|p^*|^2 -r^2)^s |p^*|^{n}}<+\infty,\end{equation}
since~$p^*\in\R^n\setminus\overline{B_r}$.
{F}rom \eqref{STAH:002} and~\eqref{STAH:003}
a contradiction plainly follows, and thus the claim in Theorem~\ref{THM}
is established.
\end{proof}

Interestingly, we point out that the proofs of Theorem~\ref{THM}
presented here hold true in a greater generality and they remain valid, with only notation modifications, in the case in which the measure~$\mu_r(\cdot)$ is replaced by a family of measure~$\mu(\cdot;U)$, possibly varying for every open set~$U$ containing~$B_r$ and which satisfy the following structural properties:
\begin{itemize}
\item the restriction of~$\mu(\cdot;U)$ to~${\mathcal{C}}B_r$
is absolutely continuous with respect to the Lebesgue measure,
\item $\mu(\cdot,B_r)=\mu_r(\cdot)$,
\item $\mu(A;U)\le\mu_r(A)$ for all~$A\subseteq{\mathcal{C}}B_r$.
\end{itemize}
In this framework, the second assumption is mainly needed to guarantee
that the ball satisfies the mean value property with respect to~$\mu(\cdot,B_r)$
(without it,
the result has to allow the possibility that no set satisfies
such a mean value property).

\subsection{Mean value gaps: Proofs of Theorems \ref{STA5}, \ref{7u8j9234905iitt}, \ref{GSTOTRA}} \label{due}

\begin{proof}[Proof of Theorem~\ref{STA5}] 
The fact that~$G_r(B_r)=0$ follows from~\eqref{mvp},
hence we focus on proving that~$G_r(\Omega)\ge1$ otherwise.
The proof is a quantitative
refinement of the potential theoretic argument
presented on page~\pageref{PAKLA1}.
We assume that~$\Omega\setminus\overline{B_r}\ne\varnothing$
and then we put ourselves in the setting of Figure~\ref{fig:my-label}: namely,
we consider a ball~$B^*\subset\Omega\setminus\overline{B_r}$
with~$(\partial B^*)\cap((\partial\Omega)\setminus\overline{ B_r})
\ne\varnothing$, a point~$p^*\in
(\partial B^*)\cap((\partial\Omega)\setminus\overline{ B_r})$,
and a sequence~$p_j\in {\mathcal{C}}\Omega$ such that~$p_j\to p^*$
as~$j\to+\infty$, thus defining~$\varpi:=B_r\cup {B^*}$. 

We take~$\varphi_{k,p_j}$ as in~\eqref{APPI2}
and~$u_{k,p_j}$ as in~\eqref{APPI}.

In this way, by~\eqref{F86243gerre},
\begin{equation}\label{6274-1}
G_r(\Omega)\ge \frac{\displaystyle
\bigg|u_{k,p_j}(0)- \frac{1}{\mu_r(\Co \Omega)} \int_{\Co \Omega} u_{k,p_j}(y)\, d\mu_r (y)\bigg|}{\displaystyle \int_{\Co B_r} |u_{k,p_j}(y)| \, d\mu_r (y)}.\end{equation}
By Lemma~\ref{LE:PDformt}, we know that
\begin{equation}\label{6274-2}
\lim_{k\to+\infty}u_{k,p_j}(0)=P_\varpi(0,p_j).
\end{equation}
Also, by Lemma~\ref{HA:odf45},
\begin{equation}\label{6274-3}
\lim_{k\to+\infty}
\int_{\Co \Omega} 
\frac{	u_{k,p_j}(y)}{(|y|^2 -r^2)^s |y|^{n} }\,dy=
\lim_{k\to+\infty}
\int_{\Co \Omega} 
\frac{	\varphi_{k,p_j}(y)}{(|y|^2 -r^2)^s |y|^{n} }\,dy=
\frac{	1}{(|p_j|^2 -r^2)^s |p_j|^{n} }
.\end{equation}
Furthermore,
since~$\varphi_{k,p_j}\ge0$, it follows from~\eqref{APPI} that~$u_{k,p_j}\ge0$.
This observation and~\eqref{mvp} give that 
\begin{equation*}
u_{k,p_j}(0) = 
\int_{\Co B_r} u_{k,p_j}(y)\, d\mu_r(y)=
\int_{\Co B_r} |u_{k,p_j}(y)| \, d\mu_r(y).
\end{equation*}
As a consequence, by~\eqref{6274-2},
\begin{equation*}
P_\varpi(0,p_j) =\lim_{k\to+\infty}
\int_{\Co B_r} |u_{k,p_j}(y)| \, d\mu_r(y)
.\end{equation*}
Then, we insert this,~\eqref{6274-2} and \eqref{6274-3}
 into~\eqref{6274-1} after taking the limits
as~$k\to+\infty$, and we find that
\begin{equation*}
G_r(\Omega)\ge \frac{
\bigg|P_\varpi(0,p_j)- \displaystyle\frac{c(n,s)r^{2s}}{\mu_r(\Co \Omega)\,(|p_j|^2 -r^2)^s |p_j|^{n} }
\bigg|}{
P_\varpi(0,p_j)}\ge
\frac{
P_\varpi(0,p_j)- \displaystyle\frac{c(n,s) r^{2s}}{\mu_r(\Co \Omega)\,(|p_j|^2 -r^2)^s |p_j|^{n} }
}{
P_\varpi(0,p_j)}
.\end{equation*}
Then, taking the limit as~$j\to+\infty$, it follows from~\eqref{STAH:001}
and~\eqref{STAH:003} that~$G_r(\Omega)\ge1$.
\end{proof}

Now we will use the result in~\cite{DSV14}
to prove Theorem~\ref{7u8j9234905iitt}.

\begin{proof}[Proof of Theorem \ref{7u8j9234905iitt}]
We point out that~$G_r^*(B_r)=0$, as a consequence of~\eqref{mvp},
hence we focus on the proof of~\eqref{5t6t6g6h89g304}.
For this,
the argument that we use here is a suitable modification
and quantification
of the one in the proof of Theorem~\ref{THM}
presented on page~\pageref{TOWMSYY023}, combined with
some rescaling methods.

We denote 
\begin{equation}\label{00557eudhvbb238850} \Omega_r := \frac{\Omega}{r} \qquad{\mbox{and}}\qquad u_r(x):=u(rx).\end{equation}
We take~$R$ so large that~$\Omega\subset B_R$,
and consequently
\begin{equation*}
B_1\subset \Omega_r\subset B_{R/r}.
\end{equation*}
Moreover, in view of~\eqref{SPAZ}, we remark that
\begin{equation*}{\mbox{$u\in\mathcal H^s(\Omega)$
if and only if~$u_r\in\mathcal H^s(\Omega_r)$. }}\end{equation*}
Furthermore, for every~$\Omega'$ which contains~$B_r$,
by~\eqref{musta1}, and using the substitution~$z:=y/r$, we see that
\begin{equation}\label{72yehffPzasg}
\begin{split}
& \int_{\Co \Omega'} u(y)\,d\mu_r(y)\,=\;
c(n,s)\,r^{2s}\,\int_{\Co\Omega'}
\frac{u(y)\,dy}{(|y|^2 -r^2)^s |y|^{n} }\\&\qquad =\;
c(n,s)\,\int_{\Co\Omega'_r}
\frac{u(rz)\,dz}{(|z|^2 -1)^s |z|^{n} } =
\int_{\Co \Omega'_r} u_r(z)\,d\mu_1(z),\end{split}
\end{equation}
where the notation in~\eqref{00557eudhvbb238850} has been used for~$\Omega'$
as well.

In particular, taking~$\Omega':=B_r$ in~\eqref{72yehffPzasg},
\begin{equation}\label{72yehffPzasg1}
\int_{\Co B_r} u(y)\,d\mu_r(y)=
\int_{\Co B_1} u_r(z)\,d\mu_1(z).
\end{equation}
Also, taking~$u:=1$
in~\eqref{72yehffPzasg},
\begin{equation*}
\mu_r(\Co\Omega')=\mu_1(\Co\Omega'_r).
\end{equation*}
This, for~$\Omega':=\Omega$ gives
\begin{equation}\label{72yehffPzasg2}
\mu_r(\Co\Omega)=\mu_1(\Co\Omega_r).
\end{equation}
Making use of this, \eqref{mvp}, \eqref{72yehffPzasg} and~\eqref{72yehffPzasg1}, we get that
\begin{equation*}
\begin{split}&
\frac{\displaystyle u(0)- \frac{1}{\mu_r(\Co \Omega)}
\int_{\Co \Omega} u(y)\, d\mu_r (y)}{
\displaystyle \int_{\Co B_r} u(y) \, d\mu_r (y)}=
\frac{\displaystyle u_r(0)- \frac{1}{\mu_1(\Co \Omega_r)}
\int_{\Co \Omega_r} u_r(y)\, d\mu_1 (y)}{
\displaystyle \int_{\Co B_1} u_r(y)\, d\mu_1 (y)}\\&\qquad
=\frac{\displaystyle \int_{\Co B_1} u_r(y)\, d\mu_1 (y)- \frac{1}{\mu_1(\Co \Omega_r)}
\int_{\Co \Omega_r} u_r(y)\, d\mu_1 (y)}{
\displaystyle \int_{\Co B_1} u_r(y)\, d\mu_1 (y)}
\\&\qquad=1-\frac{\displaystyle
\int_{\Co \Omega_r} u_r(y)\, d\mu_1 (y)}{{\mu_1(\Co \Omega_r)}\,
\displaystyle \int_{\Co B_1} u_r(y)\, d\mu_1 (y)}.
\end{split}\end{equation*}
This gives that
\begin{equation}\label{JSKDLLD:03459596}
G^*_r(\Omega)\ge\sup_{v\in \H^s(\Omega_r)}\left|
1-\frac{\displaystyle
\int_{\Co \Omega_r} v(y)\, d\mu_1 (y)}{{\mu_1(\Co \Omega_r)}\,
\displaystyle \int_{\Co B_1} v(y)\, d\mu_1 (y)}
\right|.
\end{equation}
Now, we take~$f\in C^\infty(\R^n,[0,1])$ with
\sys[f(x)=]{ & 1 && \mbox{ in } B_{1/2}  \cup  \mathcal{C}B_{2R/r}, \\
	&0 && \mbox{ in }B_{R/r}\setminus B_1.}
We let~$\eps>0$ be
sufficiently small and
we exploit Theorem~1.1 in~\cite{DSV14} 
and see that there exists $f_{R/r}\in C^s_0(\Rn)$ such that 
\begin{equation*}
\begin{split}& (-\Delta)^s f_{R/r}=0 \quad{\mbox{ in }}B_{3R/r},\\
{\mbox{and }}\quad& \| f_{R/r}-f\|_{L^\infty (B_{3R/r})}\le
\eps.
\end{split}\end{equation*}
In particular, we have that
$$\| f_{R/r}\|_{L^\infty (\Omega_r\setminus B_1)}\le
\| f_{R/r}\|_{L^\infty (B_{R/r}\setminus B_1)}=
\| f_{R/r}-f\|_{L^\infty (B_{R/r}\setminus B_1)}\le
\eps,$$ whence
\begin{equation}\label{Cabzie6787}
\begin{split}&
\int_{\Co B_1} f_{R/r}(y)\, d\mu_1 (y)
=\int_{\Co \Omega_r} f_{R/r}(y)\, d\mu_1 (y)+
\int_{\Omega_r\setminus B_1} f_{R/r}(y)\, d\mu_1 (y)\\&\qquad\le
\int_{\Co \Omega_r} f_{R/r}(y)\, d\mu_1 (y)+\eps\mu_1(\Omega_r\setminus B_1)\le \int_{\Co \Omega_r} f_{R/r}(y)\, d\mu_1 (y)+\eps
,
\end{split}
\end{equation}
thanks to~\eqref{998y99iuqhjss89}.

Notice also
that~$f_{R/r}\in \mathcal H^s(B_{R/r})\subseteq \mathcal H^s(\Omega_r)$. As a consequence
of this, \eqref{mvp}, \eqref{JSKDLLD:03459596} and~\eqref{Cabzie6787},
we see that
\begin{equation}\label{SM:93}
\begin{split}
G_r^*(\Omega)\,&\ge
\frac{\displaystyle
\int_{\Co \Omega_r} f_{R/r}(y)\, d\mu_1 (y)}{{\mu_1(\Co \Omega_r)}\,
\displaystyle \int_{\Co B_1} f_{R/r}(y)\, d\mu_1 (y)}-1\\&\ge
\frac{1}{{\mu_1(\Co \Omega_r)}}-
\frac{\eps}{{\mu_1(\Co \Omega_r)}\,
\displaystyle \int_{\Co B_1} f_{R/r}(y)\, d\mu_1 (y)}
-1\\&=\frac{1}{{\mu_1(\Co \Omega_r)}}-
\frac{\eps}{{\mu_1(\Co \Omega_r)}\, f_{R/r}(0)}
-1.
\end{split}
\end{equation}
Since
$$ f_{R/r}(0)\ge f(0)-|f_{R/r}(0)-f(0)|\ge f(0)-\eps=1-\eps\ge\frac12,$$
as long as~$\eps$ is small enough,
we deduce from~\eqref{SM:93} that
$$ G_r^*(\Omega)\ge
\frac{1}{{\mu_1(\Co \Omega_r)}}-
\frac{2\eps}{{\mu_1(\Co \Omega_r)}}
-1.$$
Hence, taking~$\eps$ as small as we wish, and recalling~\eqref{72yehffPzasg2},
we obtain that
\begin{equation} \label{7768869978}
G_r^*(\Omega)\ge
\frac{1}{{\mu_r(\Co\Omega)}}-1.
\end{equation}
Now, for any~$\delta>0$ we can consider a smooth domain~$\Omega^{(\delta)}$
such that
$$ \big(B_r\cup\{|x_n|\ge 2\delta\}\big)\cap B_{1/\delta}\subseteq\Omega^{(\delta)}
\subseteq \big(B_r\cup\{|x_n|\ge \delta\}\big)\cap B_{2/\delta}.$$
Then, we have that
\begin{eqnarray*}
\mu_r(\Co\Omega^{(\delta)})&\le&\mu_r\Big(
\big((\mathcal{C}B_r)\cap\{|x_n|< 2\delta\}\big)\cup(\mathcal{C} B_{1/\delta})
\Big)\\&\le&
\mu_r\big((\mathcal{C}B_r)\cap\{|x_n|< 2\delta\}\big)+\mu_r\big(\mathcal{C} B_{1/\delta}\big)\\&\le&
c(n,s)\,r^{2s}\left[
\int_{ (\mathcal{C}B_r)\cap\{|y_n|< 2\delta\} }
\frac{dy}{(|y|^2 -r^2)^s |y|^{n} }+
\int_{ \mathcal{C} B_{1/\delta} }
\frac{dy}{(|y|^2 -r^2)^s |y|^{n} }
\right],
\end{eqnarray*}
which is infinitesimal as~$\delta\to0$.

Using this information and~\eqref{7768869978}, we obtain
$$ \lim_{\delta\to0}G_r^*(\Omega^{(\delta)})\ge
\frac{1}{{\mu_r(\Co\Omega^{(\delta)})}}-1=+\infty,$$
thus establishing~\eqref{5t6t6g6h89g304}.
\end{proof}

Now we consider the fractional
gap defined in~\eqref{Evisd923hhdgur945}
and we show that it reproduces precisely the excess of~$\Omega$
with respect to~$B_r$, measured in terms of~$\mu_r$, thus proving
Theorem~\ref{GSTOTRA}. To do this, we use once more
the result in~\cite{DSV14}.

\begin{proof}[Proof of Theorem~\ref{GSTOTRA}]
Let~$u\in \H^s(\Omega)$ with~$|u|\leq1$ in~$\Omega$. By~\eqref{mvp}
and~\eqref{998y99iuqhjss89},
\begin{equation}\label{HAnrdjerd823}
\begin{split}&
\bigg|\mu_r(\Co \Omega)\,u(0)- \int_{\Co \Omega} u(y)\, d\mu_r (y)\bigg|
\\&\qquad=\; 
\bigg|
\frac{\mu_r(\Co \Omega)}{\mu_r(\Co B_r)} \int_{\Co B_r} u(y)\, d\mu_r (y)
- \int_{\Co \Omega} u(y)\, d\mu_r (y)\bigg|
\\&\qquad\le \; 
\bigg|
\frac{\mu_r(\Co \Omega)}{\mu_r(\Co B_r)} 
- 1\bigg|\,\bigg|
\int_{\Co B_r} u(y)\, d\mu_r (y)\bigg|
+ \int_{\Omega\setminus B_r} |u(y)|\, d\mu_r (y)
\\&\qquad\le\; 
\bigg|
\frac{\mu_r(\Co \Omega)}{\mu_r(\Co B_r)} 
- 1\bigg|\,\mu_r(\Co B_r)\,|u(0)|
+
\mu_r(\Omega\setminus B_r)\\&\qquad=\; 
\big(\mu_r(\Co B_r)-\mu_r(\Co \Omega)\big)
\,|u(0)|
+\mu_r(\Omega\setminus B_r) \\&\qquad\le
2\mu_r(\Omega\setminus B_r).
\end{split}\end{equation}
Now, we take~$R>0$ large enough such that~$\Omega\subset B_R$.
Given~$\delta\in(0,r)$, we take~$f_\delta\in C^\infty(\R^n,\,[0,1/2])$
be such that~$f_\delta=0$ in~$B_{r-\delta}$
and~$f_\delta=1/2$ in~${\mathcal{C}}B_r$.
Fixed~$\eps>0$, we exploit Theorem~1.1 in~\cite{DSV14}: 
in this way, we find a function~$f_{\delta,\eps,R}$ such that~$f_{\delta,\eps,R}\in
\H^s(B_R)\subseteq\H^s(\Omega)$ and
$$ \|f_{\delta,\eps,R}-f_{\delta}\|_{L^\infty(B_R)}\le\eps.$$
Notice that, for every~$x\in\Omega\subset B_R$, we have that
$$ |f_{\delta,\eps,R}(x)|\le |f_{\delta}(x)|+|
f_{\delta,\eps,R}(x)-f_{\delta}(x)|\le\frac12+\eps\leq1,$$
as long as~$\eps$ is sufficiently small,
and thus~$f_{\delta,\eps,R}$ is an admissible function as a competitor
for the supremum in~\eqref{Evisd923hhdgur945}.
Then, by~\eqref{mvp}
and~\eqref{998y99iuqhjss89},
\begin{eqnarray*}{\mathcal{G}}_r(\Omega)
&\ge&\bigg|\mu_r(\Co \Omega)\,f_{\delta,\eps,R}(0)- 
\int_{\Co \Omega} f_{\delta,\eps,R}(y)\, d\mu_r (y)\bigg|\\
&=&\bigg|\mu_r(\Co \Omega)\,f_{\delta,\eps,R}(0)-
\left[\int_{\Co B_r} f_{\delta,\eps,R}(y)\, d\mu_r (y)-
\int_{\Omega\setminus B_r} f_{\delta,\eps,R}(y)\, d\mu_r (y)\right]
\bigg|\\
&=&\bigg|\mu_r(\Co \Omega)\,f_{\delta,\eps,R}(0)-
\left[\mu_r(\Co B_r)\,f_{\delta,\eps,R}(0)-
\int_{\Omega\setminus B_r} f_{\delta,\eps,R}(y)\, d\mu_r (y)\right]
\bigg|\\&=&
\bigg|\big({\mu_r(\Co \Omega)}-\mu_r(\Co B_r)\big)
f_{\delta,\eps,R}(0)
+\int_{\Omega\setminus B_r} f_{\delta,\eps,R}(y)\, d\mu_r (y)\bigg|\\&=&
\bigg|-{\mu_r(\Omega \setminus B_r)}\,
f_{\delta,\eps,R}(0)
+\int_{\Omega\setminus B_r} f_{\delta,\eps,R}(y)\, d\mu_r (y)\bigg|\\&\ge&
\int_{\Omega\setminus B_r} f_{\delta,\eps,R}(y)\, d\mu_r (y)-{\mu_r(\Omega\setminus B_r)}
\,f_{\delta,\eps,R}(0)\\
&\ge&
\int_{\Omega\setminus B_r} \big(f_{\delta}(y)-\eps\big)\, d\mu_r (y)-{\mu_r(\Omega\setminus B_r)}\,
\big(f_{\delta}(0)+\eps\big)\\
&=&
\left(\frac12-\eps\right)\, \mu_r (\Omega\setminus B_r)-\eps{\mu_r(\Omega\setminus B_r)}.
\end{eqnarray*}
For this reason, by sending~$\eps\to0$, we discover that
$$ {\mathcal{G}}_r(\Omega)\ge
\frac{ \mu_r (\Omega\setminus B_r)}{2}.$$
This and~\eqref{HAnrdjerd823} yield the desired result.
\end{proof}

\subsection{Poisson-like measures:
Proofs of Theorems \ref{DERe99}, \ref{2DERe},~\ref{edcyhBoNo8549yt896549}}
\label{tre}
We start this section by providing the proof of the spherical classification result
in Theorem~\ref{2DERe}.

\begin{proof}[Proof of Theorem~\ref{2DERe}]
Given~$q\in\Co \overline \Omega$, we define~$v(x)=P_{\Omega}(x,q)- P_{B_1}(x,q)$
and we know by Lemma~\ref{DENRP} that
\begin{equation*}
\begin{cases}
(-\Delta)^s v(x)=0 & {\mbox{ for every }}x\in B_1,\\
v(x)=0 & {\mbox{ for every }}x\in\Co\overline \Omega,\\
v(x)=P_{\Omega}(x,q)& {\mbox{ for every }}x\in\Omega\setminus B_1
.\end{cases}\end{equation*}
As a consequence,
$$ v(x)=\int_{\Co B_1} v(y)\,P_{B_1}(x,y)\,dy=\int_{\Omega\setminus B_1}
P_{\Omega}(y,q)\,P_{B_1}(x,y)\,dy
$$
thereby
$$ P_{\Omega}(x,q)=P_{B_1}(x,q)
+\int_{\Omega\setminus B_1}
P_{\Omega}(y,q)\,P_{B_1}(x,y)\,dy.$$
For $x=0$, we thus have
	\bgs{ \label{fru1}
		P_\Omega(0,q)-P_{B_1}(0,q) 
		= \int_{\Omega\setminus B_1} P_\Omega(y,q)P_{B_1}(0,y)\, dy.
	}
We consider  a sequence~$q_j\in {\mathcal{C}}\Omega$ such that~$q_j\to p$
as~$j\to+\infty$. Up to taking $j$ large enough,
we have that $\dist(q_j,\partial \Omega)= \dist(q_j,\partial B_1)$.
Accordingly, using \eqref{ballpoi} we have that
	\[
		\lim_{j\to+\infty}
P_{B_1}(0,q_j)\,|q_j|^n\big(
{\rm{dist}}(q_j, \partial B_1)\big)^s =\frac{c(n,s)}{2^s}
	\]
hence, in light of 
 \eqref{uiUiuyOidDFFoi845}, we obtain
	\eqlab{ \label{Conau04co1019-01}
		0= &\lim_{j\to+\infty}
\big(P_\Omega(0,q_j)-P_{B_1}(0,q_j)\big) |q_j|^n\big(
{\rm{dist}}(q_j, \partial \Omega)\big)^s 
\\
	& =  \int_{\Omega\setminus B_1}
P_{\Omega}(y,q_j)\,P_{B_1}(0,y)\,dy
\,|q_j|^n\big(
{\rm{dist}}(q_j, \partial \Omega)\big)^s.
	}
Our goal is now to show that
\begin{equation*}
\Omega\setminus B_1=\varnothing.
\end{equation*}
To this end, we argue by contradiction and we suppose that~$\Omega\setminus B_1\ne\varnothing$.
In particular, we can find~$\bar x\in\Omega\setminus B_1$ and 
 $\rho$ small
such that~$B_\rho(\bar x)\subset\Omega\setminus B_1$
and such that
$$ {\mbox{
$y\in B_{\rho/2}(\bar x)$, 
hence }}
{\rm{dist}}(y, \partial \Omega)\ge\frac\rho2.$$
Moreover, by Lemma~2.13 in~\cite{MR1687746},
for each~$y\in\Omega$,
\begin{eqnarray*}
P_{\Omega}(y,q_j)\ge\frac{c(\Omega)\,
\big( {\rm{dist}}(y, \partial \Omega)\big)^s}{
|y-q_j|^n\,\big(
{\rm{dist}}(q_j, \partial \Omega)\big)^s\,
\big(1+{\bar c(\Omega)}{\rm{dist}}(q_j, \partial \Omega)\big)^s,}\end{eqnarray*}
for  suitable~$c(\Omega)$, ${\bar c(\Omega)}>0$, wherefore
\begin{eqnarray*}
&& \int_{\Omega\setminus B_1}
P_{\Omega}(y,q_j)\,P_{B_1}(0,y)\,dy
\,\big(
{\rm{dist}}(q_j, \partial \Omega)\big)^s\\
&\ge& \int_{\Omega\setminus B_1}
\frac{c(\Omega)\,
\big( {\rm{dist}}(y, \partial \Omega)\big)^s}{
|y-q_j|^n\,
\big(1+ {\bar c(\Omega)} \,{\rm{dist}}(q_j, \partial \Omega)\big)^s}
\,P_{B_1}(0,y)\,dy
\\&\ge&
\int_{B_{\rho/2}(\bar x)}
\frac{c(\Omega)\,
\big( \rho/2\big)^s}{
|y-q_j|^n\,
\big(1+ {\bar c(\Omega)} \,{\rm{dist}}(q_j, \partial \Omega)\big)^s}
\,P_{B_1}(0,y)\,dy.
\end{eqnarray*}
This leads to
\begin{eqnarray*}&& \lim_{j\to+\infty}\int_{\Omega\setminus B_1}
P_{\Omega}(y,q_j)\,P_{B_1}(0,y)\,dy
\,\big(
{\rm{dist}}(q_j, \partial \Omega)\big)^s\\&&\qquad\ge
\int_{B_{\rho/2}(\bar x)}
\frac{c(\Omega)\,
\big( \rho/2\big)^s}{
|y-p|^n}
\,P_{B_1}(0,y)\,dy\\&&\qquad=
\int_{B_{\rho/2}(\bar x)}
\frac{c(\Omega)\,c(n,s)\,
\big( \rho/2\big)^s}{
|y-p|^n\,(|y|^2 -1)^s |y|^{n} }\,dy
>0.\end{eqnarray*}
This is in contradiction with~\eqref{Conau04co1019-01},
and so it proves that $\Omega\setminus B_1=\varnothing$.
Therefore~$\Omega=B_1$,
hence the proof of
Theorem~\ref{2DERe}
is complete.
\end{proof}

Having completed the proof of Theorem~\ref{2DERe},
we use this result to prove Theorem \ref{DERe99},
via the following argument:

\begin{proof}[Proof of Theorem \ref{DERe99}] \label{pagehop5}
We observe, thanks to \eqref{DEF:POISSONKE}, that for any $u\in \H^s(\Omega)$ 
	\bgs{
			u(0)=\int_{\Co\Omega} u(y)\,P_\Omega(0,y)\,dy.
		}
Putting this together with \eqref{OMEGA-OME299}, we get
	\bgs{
		\int_{\Co\Omega} u(y) \left(P_\Omega(0,y)- \frac{F_\Omega(y)}{\mathfrak c(\Omega)} \right) \,dy =0.
		}	 
In particular, we can take~$u$ to be $s$-harmonic in~$\Omega$, with~$u=\phi$ in~$\Co\Omega$, for any~$\phi\in C^\infty_0(\Co \Omega)$, thus obtaining
	\bgs{
		\int_{\Co\Omega} \phi(y) \left(P_\Omega(0,y)- \frac{F_\Omega(y)}{\mathfrak c(\Omega)} \right) \,dy =0.
	}
This implies that 	
	\eqlab{\label{pan1} 
		\frac{F_\Omega(y) }{\mathfrak c(\Omega)} = P_\Omega(0,y) \quad \mbox{ a.e. in } \; \Co \Omega.
		}
Using \eqref{fru3}, we get that
\[\lim_{{q\in{\mathcal{C}}\Omega}\atop{q\to p}} P_\Omega(0,q)|q|^n \dist^s(q,\partial \Omega)= \frac{c(n,s)}{2^s}.\]
This gives that condition~\eqref{uiUiuyOidDFFoi845} 
is satisfied and
then the assumptions of Theorem~\ref{2DERe}
are fulfilled, thus allowing
us to conclude that $\Omega=B_1$.
\end{proof}

We use the  analysis developed in this section to
give the proof of Lemma~\ref{JKMS:0-2rfiugj}.

\begin{proof}[Proof of Lemma~\ref{JKMS:0-2rfiugj}]
By Lemma \ref{DENRP}, we know
that, for all $q\in \Co \overline \Omega$,
	\[ P_{B_1}(0,q) \leq P_\Omega(0,q). \] 
Using this, \eqref{hop3},
\eqref{Nodnsckq2hwyrfy8w} and~\eqref{pan1} (which holds, as a consequence of~\eqref{OMEGA-OME299}),
we see that
\begin{equation}\label{Idbentiti}\begin{split}&
\frac{c(n,s)}{|q|^n\left(\dist(q,\partial B_1)\right)^s
\left(\dist(q,\partial B_1)+2\right)^s }=P_{B_1}(0,q)\le
P_\Omega(0,q)\\&\qquad=\frac{F_\Omega(q)}{
\mathfrak c(\Omega)}=
				\frac{ c(n,s)}{\mathfrak c(\Omega) |q|^n \,\big(
					{\rm{dist}}(q, \partial \Omega)\big)^s\,
					\big(2+{\rm{dist}}(q, \partial \Omega)\big)^s}.
\end{split}\end{equation}
This gives that	\begin{equation}\label{sinsavkrfgp934r}
		\mathfrak c(\Omega) \leq  \frac{\left(\dist(q,\partial B_1)\right)^s \left(\dist(q,\partial B_1)+2\right)^s}{\left(\dist(q,\partial \Omega)\right)^s \left(\dist(q,\partial \Omega)+2\right)^s}  .
	\end{equation}
Now, we take~$p\in(\partial\Omega)\cap(\partial B_1)$
and, for small~$t>0$, we consider~$q_t:=(1+t)p\in\Co\Omega$.
We observe that, for small~$t$, $\dist(q_t,\partial \Omega)=\dist(q_t,\partial B_1)$
and hence,
in view of~\eqref{sinsavkrfgp934r},
we deduce that
\begin{equation*}
		\mathfrak c(\Omega) \leq
\lim_{t\to0^+} \frac{\left(\dist(q_t,\partial B_1)\right)^s \left(\dist(q_t,\partial B_1)+2\right)^s}{\left(\dist(q_t,\partial \Omega)\right)^s \left(\dist(q_t,\partial \Omega)+2\right)^s}  =1.
	\end{equation*}
Using instead that~$\Omega\subseteq B_R$,
and thus~$P_{B_R}(0,q) \geq P_\Omega(0,q)$
for all $q\in \Co \overline B_R$, we obtain that~$\mathfrak c(\Omega)
\geq R^{-2s}$, and we can conclude the desired claim in~\eqref{7768cnsnamcppBB}.

To prove~\eqref{OMABXpal},
we notice that if~$\Omega=B_1$, then the ``$\leq$''
in~\eqref{Idbentiti} reduces to ``$=$''.
Therefore~$\mathfrak c(B_1)=1$, which is one implication of~\eqref{OMABXpal}.

Now, to prove the other implication of~\eqref{OMABXpal},
we assume~$\mathfrak c(\Omega)=1$
and we aim at proving that~$\Omega=B_1$.
With this assumption, we have from~\eqref{Idbentiti}
that
$$ P_\Omega(0,q)=
				\frac{ c(n,s)}{\mathfrak c(\Omega) |q|^n \,\big(
					{\rm{dist}}(q, \partial \Omega)\big)^s\,
					\big(2+{\rm{dist}}(q, \partial \Omega)\big)^s}=
\frac{ c(n,s)}{ |q|^n \,\big(
					{\rm{dist}}(q, \partial \Omega)\big)^s\,
					\big(2+{\rm{dist}}(q, \partial \Omega)\big)^s}
.$$
Hence, taking~$p$ and~$q_t$ as above,
$$ \lim_{t\to0^+}P_\Omega(0,q_t)\,|q_t|^n \,\big(
					{\rm{dist}}(q_t, \partial \Omega)\big)^s
=\lim_{t\to0^+}
\frac{ c(n,s)}{\big(2+{\rm{dist}}(q_t, \partial \Omega)\big)^s}
=\frac{ c(n,s)}{2^s}.$$
This gives that~\eqref{uiUiuyOidDFFoi845} is satisfied,
and therefore, by Theorem~\ref{2DERe},
we obtain that~$\Omega=B_1$, as desired.
\end{proof}

Now we deal with
the proof of
Theorem~\ref{edcyhBoNo8549yt896549}.
For this, we drew inspiration from a method developed
in a different framework
in~\cite{MR1980119} (see in particular Theorem~3.2 there).
As a first step towards the proof of
Theorem~\ref{edcyhBoNo8549yt896549},
we obtain some uniform bounds on the Poisson kernel.

\begin{lemma}\label{0.1lemma}
Let~$\Omega\subset\R^n$ be an open set with~$C^{1,1}$
boundary, with~$p\in\partial \Omega$ and let~$\bar x\in\Omega$.
Let also~$x_{\rm int}$, $x_{\rm ext}\in\R^n$
and~$r_{\rm int}$, $r_{\rm ext}>0$ be such that
\begin{equation}\label{89:8760193d}
B_{r_{\rm int}}(x_{\rm int})\subseteq
\Omega\subseteq \Co B_{r_{\rm ext}}(x_{\rm ext}),\end{equation}
with
\begin{equation}\label{89:8760193d-2}
p\in(\partial B_{r_{\rm int}}(x_{\rm int}))\cap(\partial
B_{r_{\rm ext}}(x_{\rm ext})).\end{equation}
Then,
\begin{equation*}\begin{split}&
\frac{c(n,s)}{2^s\,r_{\rm int}^s\,|\bar x-p|^n}\left(
2r_{\rm int}\nu(p)\cdot(p-\bar x)
-|\bar x-p|^2
\right)^s\le\liminf_{t\to0^+}
P_\Omega(\bar x,p+t\nu(p))\,t^s\\&\qquad
\le\limsup_{t\to0^+}
P_\Omega(\bar x,p+t\nu(p))\,t^s\le
\frac{c(n,s)}{2^s\,r_{\rm ext}^s\,|\bar x-p|^n}\left(
2r_{\rm ext}\nu(p)\cdot(p-\bar x)+
|\bar x-p|^2
\right)^s.\end{split}\end{equation*}\end{lemma}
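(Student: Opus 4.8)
The plan is to trap~$P_\Omega(\bar x,\cdot)$, near the boundary point~$p$, between the Poisson kernels of the interior and exterior osculating balls at~$p$ — for which explicit formulas are available — and then to pass to the limit along the inward/outward normal ray. To set things up, I would first record that, by the~$C^{1,1}$ regularity and the tangency condition~\eqref{89:8760193d-2}, one has~$x_{\rm int}=p-r_{\rm int}\,\nu(p)$ and~$x_{\rm ext}=p+r_{\rm ext}\,\nu(p)$, and (arranging the osculating balls so that their closures meet~$\overline\Omega$ only at~$p$, as one may for a~$C^{1,1}$ set) that~$B_{r_{\rm int}}(x_{\rm int})\subseteq\Omega\subseteq\Co\overline{B_{r_{\rm ext}}(x_{\rm ext})}$. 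For small~$t>0$ put~$q_t:=p+t\,\nu(p)$; then~$q_t\in\Co\overline\Omega$, $q_t\in B_{r_{\rm ext}}(x_{\rm ext})$, and~$|q_t-x_{\rm int}|=r_{\rm int}+t$. We also use~$\bar x\in\Omega$, and, for the lower estimate, the hypothesis~$\bar x\in B_{r_{\rm int}}(x_{\rm int})$, which is exactly what makes~$r_{\rm int}^2-|\bar x-x_{\rm int}|^2=2r_{\rm int}\,\nu(p)\cdot(p-\bar x)-|\bar x-p|^2$ nonnegative.

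Next I would invoke the monotonicity of the Poisson kernel under domain inclusion, in the form already exploited in the proof of Theorem~\ref{2DERe}: for~$D_1\subseteq D_2$, $a\in D_1$ and~$b\in\Co\overline{D_2}$, the function~$v:=P_{D_2}(\cdot,b)-P_{D_1}(\cdot,b)$ is~$s$-harmonic in~$D_1$, vanishes on~$\Co\overline{D_2}$, and equals~$P_{D_2}(\cdot,b)\ge0$ on~$D_2\setminus D_1$; hence the Poisson representation in~$D_1$ (Lemma~\ref{DENRP}) gives
\[ P_{D_2}(a,b)=P_{D_1}(a,b)+\int_{D_2\setminus D_1}P_{D_1}(a,y)\,P_{D_2}(y,b)\,dy\ \ge\ P_{D_1}(a,b).\]
Applying this with~$(D_1,D_2)=(B_{r_{\rm int}}(x_{\rm int}),\Omega)$ and then with~$(D_1,D_2)=\big(\Omega,\ \Co\overline{B_{r_{\rm ext}}(x_{\rm ext})}\big)$, in both cases at~$a=\bar x$ and~$b=q_t$, yields for all small~$t>0$
\[ P_{B_{r_{\rm int}}(x_{\rm int})}(\bar x,q_t)\ \le\ P_\Omega(\bar x,q_t)\ \le\ P_{\Co\overline{B_{r_{\rm ext}}(x_{\rm ext})}}(\bar x,q_t).\]

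Finally I would compute the two ends explicitly. By the translated and rescaled version of~\eqref{ballpoi}, $P_{B_\rho(z)}(a,b)=c(n,s)\big((\rho^2-|a-z|^2)/(|b-z|^2-\rho^2)\big)^s|a-b|^{-n}$, and its Kelvin image gives~$P_{\Co\overline{B_\rho(z)}}(a,b)=c(n,s)\big((|a-z|^2-\rho^2)/(\rho^2-|b-z|^2)\big)^s|a-b|^{-n}$. Inserting~$a=\bar x$, $b=q_t$, $z=x_{\rm int}$ (resp.~$z=x_{\rm ext}$), using
\[ |q_t-x_{\rm int}|^2-r_{\rm int}^2=2r_{\rm int}t+t^2,\qquad r_{\rm ext}^2-|q_t-x_{\rm ext}|^2=2r_{\rm ext}t-t^2,\]
\[ r_{\rm int}^2-|\bar x-x_{\rm int}|^2=2r_{\rm int}\,\nu(p)\cdot(p-\bar x)-|\bar x-p|^2,\qquad |\bar x-x_{\rm ext}|^2-r_{\rm ext}^2=2r_{\rm ext}\,\nu(p)\cdot(p-\bar x)+|\bar x-p|^2,\]
together with~$|\bar x-q_t|\to|\bar x-p|$, I would multiply the sandwich by~$t^s$ and let~$t\to0^+$; since~$t^s/(2r_{\rm int}t+t^2)^s\to(2r_{\rm int})^{-s}$ and~$t^s/(2r_{\rm ext}t-t^2)^s\to(2r_{\rm ext})^{-s}$, the left and right ends tend respectively to~$\frac{c(n,s)}{2^sr_{\rm int}^s|\bar x-p|^n}\big(2r_{\rm int}\nu(p)\cdot(p-\bar x)-|\bar x-p|^2\big)^s$ and~$\frac{c(n,s)}{2^sr_{\rm ext}^s|\bar x-p|^n}\big(2r_{\rm ext}\nu(p)\cdot(p-\bar x)+|\bar x-p|^2\big)^s$, which, with~$\liminf\le\limsup$, is exactly the claimed chain of inequalities.

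The step I expect to require the most care is the right-hand comparison, since it brings in the Poisson kernel of an \emph{unbounded} domain (the exterior of a ball): one has to either justify its explicit Kelvin-transform expression together with the associated maximum principle/representation, or run the same argument on a bounded annulus~$B_{R'}(x_{\rm ext})\setminus\overline{B_{r_{\rm ext}}(x_{\rm ext})}$ and send~$R'\to+\infty$, or appeal to the balayage/probabilistic description of~$P_\Omega$. The interior comparison and the limit computation are then routine, the only remaining subtlety being that the lower bound is meaningful precisely under the hypothesis~$\bar x\in B_{r_{\rm int}}(x_{\rm int})$, which guarantees nonnegativity of the base of the~$s$-th power appearing there.
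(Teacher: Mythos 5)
Your proposal is correct and follows essentially the same route as the paper: sandwich $P_\Omega(\bar x,\cdot)$ between the Poisson kernels of the interior tangent ball and of the complement of the exterior tangent ball via the monotonicity in Lemma~\ref{DENRP}, insert $y=p+t\nu(p)$ into the explicit formulas, and pass to the limit using $|p+t\nu(p)-x_{\rm int}|^2-r_{\rm int}^2=2r_{\rm int}t+t^2$ and $r_{\rm ext}^2-|p+t\nu(p)-x_{\rm ext}|^2=2r_{\rm ext}t-t^2$. Your two flagged subtleties (justifying the Poisson kernel of the unbounded exterior domain, and the need for $\bar x\in B_{r_{\rm int}}(x_{\rm int})$ so that the base of the $s$-th power in the lower bound is nonnegative) are legitimate points that the paper passes over in silence, and they are consistent with how the lemma is actually applied later.
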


\begin{proof} By~\eqref{89:8760193d}
and Lemma~\ref{DENRP}, for all~$y\in B_{r_{\rm ext}}(x_{\rm ext})$,
\begin{equation*}
P_{B_{r_{\rm int}}(x_{\rm int})}(\bar x,y)
\le P_\Omega(\bar x,y)\le
P_{\Co B_{r_{\rm ext}}(x_{\rm ext})}(\bar x,y),
\end{equation*}
and therefore, by \eqref{ballpoi}
\begin{equation*}
\frac{c(n,s)}{|\bar x-y|^n}\left(
\frac{r_{\rm int}^2-|\bar x-x_{\rm int}|^2}{|y-x_{\rm int}|^2-r_{\rm int}^2}
\right)^s\le
P_\Omega(\bar x,y)\le
\frac{c(n,s)}{|\bar x-y|^n}\left(
\frac{|\bar x-x_{\rm ext}|^2-r_{\rm ext}^2}{r_{\rm ext}^2-|y-x_{\rm ext}|^2}
\right)^s
.\end{equation*}
In particular, taking~$t>0$ suitably small and~$y:=p+t\nu(p)$,
\begin{equation}\label{67HSA:qowQWS645966}\begin{split}&
\frac{c(n,s)\,t^s}{|\bar x-p-t\nu(p)|^n}\left(
\frac{r_{\rm int}^2-|\bar x-x_{\rm int}|^2}{|p+t\nu(p)-x_{\rm int}|^2-r_{\rm int}^2}
\right)^s\le
P_\Omega(\bar x,p+t\nu(p))\,t^s\\&\qquad
\le
\frac{c(n,s)\,t^s}{|\bar x-p-t\nu(p)|^n}\left(
\frac{|\bar x-x_{\rm ext}|^2-r_{\rm ext}^2}{r_{\rm ext}^2-|p+t\nu(p)-x_{\rm ext}|^2}
\right)^s
.\end{split}\end{equation}
In light of~\eqref{89:8760193d-2},
we point out that~$|p+t\nu(p)-x_{\rm int}|=r_{\rm int}+t$
and~$|p+t\nu(p)-x_{\rm ext}|=r_{\rm ext}-t$. Consequently,
\begin{eqnarray*}
&& |p+t\nu(p)-x_{\rm int}|^2-r_{\rm int}^2=2r_{\rm int}\,t+t^2
\\
{\mbox{and }}&&
r_{\rm ext}^2-|p+t\nu(p)-x_{\rm ext}|^2=2r_{\rm ext}\,t-t^2.
\end{eqnarray*}
This and~\eqref{67HSA:qowQWS645966}
yield that
\begin{equation*}\begin{split}&
\frac{c(n,s)}{|\bar x-p-t\nu(p)|^n}\left(
\frac{r_{\rm int}^2-|\bar x-x_{\rm int}|^2}{
2r_{\rm int}+t}
\right)^s\le
P_\Omega(\bar x,p+t\nu(p))\,t^s\\&\qquad
\le
\frac{c(n,s)}{|\bar x-p-t\nu(p)|^n}\left(
\frac{|\bar x-x_{\rm ext}|^2-r_{\rm ext}^2}{
2r_{\rm ext}-t}
\right)^s
,\end{split}\end{equation*}
from which we obtain that
\begin{equation}\label{878408785859840832123}\begin{split}&
\frac{c(n,s)}{2^s\,r_{\rm int}^s\,|\bar x-p|^n}\left(
r_{\rm int}^2-|\bar x-x_{\rm int}|^2
\right)^s\le\liminf_{t\to0^+}
P_\Omega(\bar x,p+t\nu(p))\,t^s\\&\qquad
\le\limsup_{t\to0^+}
P_\Omega(\bar x,p+t\nu(p))\,t^s\le
\frac{c(n,s)}{2^s\,r_{\rm ext}^s\,|\bar x-p|^n}\left(
|\bar x-x_{\rm ext}|^2-r_{\rm ext}^2
\right)^s.\end{split}\end{equation}
We also remark that
$$ x_{\rm int}=p-r_{\rm int}\nu(p)\qquad{\mbox{and}}\qquad
x_{\rm ext}=p+r_{\rm ext}\nu(p),$$
consequently
\begin{eqnarray*}
&&r_{\rm int}^2-|\bar x-x_{\rm int}|^2=
r_{\rm int}^2-|\bar x-p+r_{\rm int}\nu(p)|^2
=-|\bar x-p|^2-2r_{\rm int}\nu(p)\cdot(\bar x-p)
\\{\mbox{and }}&&
|\bar x-x_{\rm ext}|^2-r_{\rm ext}^2=
|\bar x-p-r_{\rm ext}\nu(p)|^2-r_{\rm ext}^2=
|\bar x-p|^2-2r_{\rm ext}\nu(p)\cdot(\bar x-p).
\end{eqnarray*}
Plugging this information into~\eqref{878408785859840832123}
we obtain
the desired result.
\end{proof}

With this, we can now complete the proof of
Theorem~\ref{edcyhBoNo8549yt896549}.

\begin{proof}[Proof of Theorem~\ref{edcyhBoNo8549yt896549}] Let~$\eta\in(0,1)$ be as small as we wish in what follows.
We consider interior and exterior tangent balls at~$p$, as in~\eqref{89:8760193d}
and~\eqref{89:8760193d-2},
and we define~$\bar x_\eta:= \eta x_{\rm int}+(1-\eta)p$.

Let also
$$ \Psi(t):=
P_\Omega(x_0,p+t\nu(p))\,\big( {\rm dist}(p+t\nu(p),\partial\Omega)\big)^s=
P_\Omega(x_0,p+t\nu(p))\,t^s.$$
We define
$$ \Phi(t,\eta):=P_\Omega(\bar x_\eta,p+t\nu(p))\,t^s$$
and we see that
\begin{equation}\label{9orj88656ogog}
\Psi(t)=\frac{P_\Omega(x_0,p+t\nu(p))}{P_\Omega(\bar x_\eta,p+t\nu(p))}\,\Phi(t,\eta).\end{equation}
We recall the notion of Martin kernel
based at~$x_0$, see e.g.~\cite[Theorems~2 and~3]{MR2365478}
or~\cite[Theorem~4.3]{MR3932105} (see also~\cite{MR1654115}
and the references therein for a comprehensive treatment
of Martin kernels): in this setting, for every~$x\in\Omega$,
$p\in\partial\Omega$,
we can write that
$$ M_\Omega^{x_0}(x,p)=\lim_{y\to p}\frac{P_\Omega(x,y)}{P_\Omega(x_0,y)}.$$
This and~\eqref{9orj88656ogog} give that
\begin{equation*}\label{894885j-2934}\begin{split}&
\limsup_{t\to0^+}\Psi(t)=\frac{1}{M_\Omega^{x_0}(\bar x_\eta,p)}\,
\limsup_{t\to0^+}\Phi(t,\eta)\\ \qquad{\mbox{and}}\qquad&
\liminf_{t\to0^+}\Psi(t)=\frac{1}{M_\Omega^{x_0}(\bar x_\eta,p)}\,
\liminf_{t\to0^+}\Phi(t,\eta).\end{split}\end{equation*}
From this and in light of Lemma~\ref{0.1lemma},
\begin{eqnarray*}
\frac{\displaystyle\limsup_{t\to0^+}\Psi(t)}{\displaystyle\liminf_{t\to0^+}\Psi(t)} &=& \frac{\displaystyle\limsup_{t\to0^+}\Phi(t,\eta)}{\displaystyle\liminf_{t\to0^+}\Phi(t,\eta)}
\\
&\le&\frac{\displaystyle
\frac{c(n,s)}{2^s\,r_{\rm ext}^s\,|\bar x_\eta-p|^n}\left(
2r_{\rm ext}\nu(p)\cdot(p-\bar x_\eta)+
|\bar x_\eta-p|^2
\right)^s}{\displaystyle\frac{c(n,s)}{2^s\,r_{\rm int}^s\,|\bar x_\eta-p|^n}\left(
2r_{\rm int}\nu(p)\cdot(p-\bar x_\eta)
-|\bar x_\eta-p|^2
\right)^s}
\\&=&
\frac{r_{\rm int}^s\,\left(
2\eta r_{\rm ext}\nu(p)\cdot (p-x_{\rm int})
+
|\eta (p-x_{\rm int})|^2
\right)^s}{r_{\rm ext}^s\,\left(
2\eta r_{\rm int}\nu(p)\cdot(p-x_{\rm int})
-|\eta (p-x_{\rm int})|^2
\right)^s}\\&=&
\frac{r_{\rm int}^s\,\left(
2 r_{\rm ext}\nu(p)\cdot (p-x_{\rm int})
+
\eta| (p-x_{\rm int})|^2
\right)^s}{r_{\rm ext}^s\,\left(
2r_{\rm int}\nu(p)\cdot(p-x_{\rm int})
-\eta| (p-x_{\rm int})|^2
\right)^s}.
\end{eqnarray*}
Consequently, by sending~$\eta\to0^+$,
\begin{equation*}
\frac{\displaystyle\limsup_{t\to0^+}\Psi(t)}{\displaystyle\liminf_{t\to0^+}\Psi(t)}
\le
\frac{r_{\rm int}^s\,\left(
2 r_{\rm ext}\nu(p)\cdot (p-x_{\rm int})
\right)^s}{r_{\rm ext}^s\,\left(
2r_{\rm int}\nu(p)\cdot(p-x_{\rm int})
\right)^s}=1,
\end{equation*}
yielding the desired result.
\end{proof}

\begin{appendix}

\section{A note on the fractional mean value formula on balls}\label{UNAryty45666}

In this appendix, we present an auxiliary result
that shows that continuous functions
that are $s$-harmonic in a given domain
satisfy the mean value formula for every ball contained in the domain
(and not only for the balls that are compactly contained in the domain). For this end, we recall definition \eqref{SPAZ}.

\begin{lemma} \label{LemmaJAK:AA344}
Assume that~$u\in \H^s(\Omega)$
and suppose that~$B_r\subset\Omega$. Then~\eqref{mvpx}
holds true.
\end{lemma}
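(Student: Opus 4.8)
The plan is to reduce the statement to the already-known mean value formula on compactly contained balls by an approximation argument in the radius. First I would observe that, by the definition of~$r={\rm dist}(0,\partial\Omega)$-type reasoning (here simply~$B_r\subset\Omega$), for every~$\rho\in(0,r)$ the ball~$B_\rho$ is compactly contained in~$\Omega$, so that the classical fractional mean value identity applies:
\eqlab{\label{PLANeq1}
u(0)=c(n,s)\int_{\Co B_\rho}\frac{\rho^{2s}\,u(y)}{(|y|^2-\rho^2)^s|y|^n}\,dy
\qquad\text{for every }\rho\in(0,r).
}
The goal is then to let~$\rho\uparrow r$ and show that the right-hand side converges to the corresponding expression with~$\rho$ replaced by~$r$, which is exactly~\eqref{mvpx}.

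The main work is justifying the passage to the limit in~\eqref{PLANeq1}. I would split the integral over~$\Co B_\rho$ into the region~$\Co B_r$ and the annular shell~$B_r\setminus B_\rho$. On~$\Co B_r$ the integrand converges pointwise as~$\rho\uparrow r$ to the integrand of~\eqref{mvpx}, and it is dominated: for~$\rho$ close to~$r$ and~$|y|\ge r$ one bounds~$(|y|^2-\rho^2)^{-s}$ uniformly near the sphere~$\partial B_r$ by comparison with~$(|y|^2-r_0^2)^{-s}$ for a fixed~$r_0<r$, while the decay of~$|u(y)|/(1+|y|^{n+2s})$ (which holds since~$u\in\H^s(\Omega)$) controls the behavior at infinity; hence dominated convergence applies on~$\Co B_r$. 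For the shell~$B_r\setminus B_\rho$ I would show the contribution is~$o(1)$: here~$u$ is continuous on the compact set~$\overline{B_r}\subset\Omega$, hence bounded, so it suffices to check that~$\mu_\rho(B_r\setminus B_\rho)\to0$, equivalently that~$\int_{B_r\setminus B_\rho}\rho^{2s}(|y|^2-\rho^2)^{-s}|y|^{-n}\,dy\to0$. Since~$\mu_\rho(\Co B_\rho)=1$ by~\eqref{998y99iuqhjss89} and~$\mu_\rho(\Co B_r)\to\mu_r(\Co B_r)=1$ by the dominated-convergence argument just described applied to~$u\equiv1$, subtracting gives~$\mu_\rho(B_r\setminus B_\rho)=1-\mu_\rho(\Co B_r)\to0$, as desired.

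The one point requiring a little care — and the step I expect to be the main obstacle — is the uniform domination near the sphere~$|y|=r$ in the~$\Co B_r$ piece: one must make sure that as~$\rho\uparrow r$ the singular factors~$(|y|^2-\rho^2)^{-s}$ do not blow up in a non-integrable way. This is handled by noting that for~$\rho\in[r_0,r]$ with~$r_0<r$ fixed one has~$(|y|^2-\rho^2)^{-s}\le(|y|^2-r_0^2)^{-s}$ on~$\Co B_r$, and~$y\mapsto(|y|^2-r_0^2)^{-s}|y|^{-n}$ is integrable against~$|u(y)|$ on~$\Co B_r$ because it is bounded on~$B_{2r}\setminus B_r$ (away from its only singularity~$|y|=r_0<r$) and decays like~$|y|^{-n-2s}$ at infinity, matching the integrability assumption built into~$\H^s(\Omega)$. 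Once this dominating function is in place, both limits follow from the dominated convergence theorem, and combining the two pieces with~\eqref{PLANeq1} yields~\eqref{mvpx}, completing the proof.
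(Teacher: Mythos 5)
Your overall strategy --- apply the mean value identity on $B_\rho$ for $\rho<r$, split $\Co B_\rho$ into $\Co B_r$ and the shell $B_r\setminus B_\rho$, kill the shell by showing $\mu_\rho(B_r\setminus B_\rho)\to0$, and pass to the limit on $\Co B_r$ by dominated convergence --- is sound, and it is a genuinely different (arguably more direct) route than the paper's, which instead first cuts off the tails over $\Co B_R$ uniformly in $\rho$ and then rescales by $z:=ry/\rho$ so that both remaining integrals live on sets of the form $B_{\cdot}\setminus B_r$ before applying dominated convergence and continuity of $u$. However, the majorant you propose in what you yourself flag as the delicate step is wrong: for $|y|\ge r$ and $r_0\le\rho\le r$ one has $|y|^2-\rho^2\le |y|^2-r_0^2$, hence $(|y|^2-\rho^2)^{-s}\ge(|y|^2-r_0^2)^{-s}$, which is the \emph{reverse} of the inequality you claim; your candidate is a minorant, not a majorant. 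The underlying misconception shows in your remark that the dominating function is ``bounded on $B_{2r}\setminus B_r$'': no bounded function can dominate this family on $\Co B_r$, since the pointwise limit $(|y|^2-r^2)^{-s}$ itself blows up as $|y|\downarrow r$.

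The repair is immediate and preserves your architecture: since $\rho<r$, on $\Co B_r$ one has $|y|^2-\rho^2>|y|^2-r^2$, so $\rho^{2s}(|y|^2-\rho^2)^{-s}\le r^{2s}(|y|^2-r^2)^{-s}$, i.e.\ the limit integrand is itself the majorant. It is integrable against $|u(y)|\,|y|^{-n}$ over $\Co B_r$ because near $|y|=r$ the singularity behaves like $(|y|-r)^{-s}$ with $s<1$ while $u\in C(\R^n)$ is bounded on $\overline{B_{2r}}$, and at infinity the weight condition built into \eqref{SPAZ} applies. The same corrected domination also justifies the convergence $\mu_\rho(\Co B_r)\to\mu_r(\Co B_r)=1$ on which your shell estimate relies. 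With that single inequality fixed, your proof is complete.
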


\begin{proof} Let~$\rho\in\left(\frac{r}2,r\right)$. Then~$B_\rho\subset\subset B_r
\subset\Omega$. Therefore, in view of~\eqref{LemmaJAK:AA3440},
we can employ~\eqref{mvpx} with respect to the ball~$B_\rho$,
hence
\begin{equation}\label{OKchxioq123e94-PRE}
u(0)=
c(n,s)\,\int_{\Co B_\rho} \frac{ \rho^{2s}\, u(y)}{(|y|^2-\rho^2)^s|y|^n}\, dy.
\end{equation}
Furthermore, if~$R\ge 2r$ and~$y\in\Co B_R$, we have that
$$ |y|^2-\rho^2=(|y|+\rho)(|y|-\rho)\ge
|y|\left( \frac{|y|}2+\frac{R}2-\rho\right)\ge\frac{|y|^2}{2}.$$
As a result, given any~$\epsilon\in(0,1)$,
taking a suitable~$
R\ge 2r$,
to be chosen sufficiently
large, possibly in dependence of~$\epsilon$, $u$, $r$, $n$ and~$s$,
but independent of~$\rho$, and
exploiting~\eqref{SPAZ}, we see that
$$ 
c(n,s)\,\left|
\int_{\Co B_R} \frac{ \rho^{2s}\, u(y)}{(|y|^2-\rho^2)^s|y|^n}\, dy\right|
\le 2^{s}\;
c(n,s)\,\int_{\Co B_R} \frac{ r^{2s}\, |u(y)|}{|y|^{n+2s}}\, dy\le\epsilon,
$$
and similarly
$$ 
c(n,s)\,\left|
\int_{\Co B_R} \frac{ r^{2s}\, u(y)}{(|y|^2-r^2)^s|y|^n}\, dy\right|
\le \epsilon.$$
This and~\eqref{OKchxioq123e94-PRE} give that
\begin{eqnarray*}&& \left| 
u(0)-
c(n,s)\,\int_{\Co B_r} \frac{ r^{2s}\, u(y)}{(|y|^2-r^2)^s|y|^n}\, dy
\right|\\&=&
\left| 
c(n,s)\,\int_{\Co B_\rho} \frac{ \rho^{2s}\, u(y)}{(|y|^2-\rho^2)^s|y|^n}\, dy-
c(n,s)\,\int_{\Co B_r} \frac{ r^{2s}\, u(y)}{(|y|^2-r^2)^s|y|^n}\, dy
\right|\\&\le&2\epsilon+c(n,s)\,
\left| 
\int_{B_R\setminus B_\rho} \frac{ \rho^{2s}\, u(y)}{(|y|^2-\rho^2)^s|y|^n}\, dy-
\int_{B_R\setminus B_r} \frac{ r^{2s}\, u(y)}{(|y|^2-r^2)^s|y|^n}\, dy
\right|.
\end{eqnarray*}
Hence, after the change of variable~$z:=ry/\rho$ in one integral,
we obtain that
\begin{equation}\label{OKchxioq123e94}
\begin{split} 
& \left| 
u(0)-
c(n,s)\,\int_{\Co B_r} \frac{ r^{2s}\, u(y)}{(|y|^2-r^2)^s|y|^n}\, dy
\right|\\
\le\,&
2\epsilon+c(n,s)\,
\left| 
\int_{B_{Rr/\rho}\setminus B_r} \frac{ r^{2s}\, u(\rho z/r)}{(|z|^2-r^2)^s|z|^n}\, dz
-
\int_{B_R\setminus B_r} \frac{ r^{2s}\, u(y)}{(|y|^2-r^2)^s|y|^n}\, dy
\right|.\end{split}
\end{equation}
Moreover, since~$u$ is continuous, we have that
$$ 
\chi_{ B_{Rr/\rho}\setminus B_r}(z)
\frac{ r^{2s}\, |u(\rho z/r)|}{(|z|^2-r^2)^s|z|^n}\le
\frac{ r^{2s}\, \|u\|_{L^\infty(B_{R})} }{(|z|^2-r^2)^s|z|^n}\in L^1(\R^n).$$
Consequently, by the Dominated Convergence Theorem and the continuity of~$u$,
we can take the limit as~$\rho\nearrow r$ in~\eqref{OKchxioq123e94},
with~$\epsilon$ fixed, concluding that
\begin{equation*}
\begin{split} 
& \left| 
u(0)-
c(n,s)\,\int_{\Co B_r} \frac{ r^{2s}\, u(y)}{(|y|^2-r^2)^s|y|^n}\, dy
\right|\\
\le\,&
2\epsilon+c(n,s)\,
\left| 
\int_{B_{R}\setminus B_r} \frac{ r^{2s}\, u(z)}{(|z|^2-r^2)^s|z|^n}\, dz
-
\int_{B_R\setminus B_r} \frac{ r^{2s}\, u(y)}{(|y|^2-r^2)^s|y|^n}\, dy
\right|\\=\,&2\epsilon.
\end{split}
\end{equation*}
Since~$\epsilon$ can now be taken arbitrarily small, we conclude that
$$ u(0)=
c(n,s)\,\int_{\Co B_r} \frac{ r^{2s}\, u(y)}{(|y|^2-r^2)^s|y|^n}\, dy,$$
as desired.
\end{proof}

\section{Summary of potential theory}\label{POSKE}

We collect here some ancillary results needed for the potential
theoretic proofs of our main results. For comprehensive
treatments of fractional potential theory,
see e.g.~\cite{MR126885,
MR1438304, chengrennest, MR1654115, MR1980119, MR2006232, bogdan1, MR2256481, MR2365478, MR2569321, MR4061422}
and the references therein.

In this appendix, we will always denote by~$\varpi$
a bounded open set with smooth (say, $C^{1,1}$) boundary.
We will also denote by~$P_\varpi$ the fractional Poisson Kernel
of~$\varpi$, see e.g. Theorem 2.1 of~\cite{MR1687746}:
in this way, if~$u$ is $s$-harmonic in~$\varpi$ and~$u=\bar u$
outside~$\varpi$, we have that
\begin{equation}\label{DEF:POISSONKE}
u(x)=\int_{\R^n\setminus\varpi} \bar u(y)\,P_\varpi(x,y)\,dy\qquad
{\mbox{ for all }}x\in\varpi.
\end{equation}
We recall that the fractional Poisson kernel on the ball $B_R(x_0)$ is defined for all $x\in B_R(x_0)$ and $y\in \Co \overline B_R(x_0)$ as
	\eqlab{ \label{ballpoi}
	P_{B_R(x_0)}(x,y)= \frac{c(n,s) (R^2-|x-x_0|^2)^s}{(|y-x_0|^2-R^2)^s)|x-y|^n}.			
	}
We also recall some basic properties of the fractional Poisson Kernel.

\begin{lemma}\label{CONTY}
For each~$x\in\varpi$, the function~$\R^n\setminus\overline\varpi
\ni y\mapsto P_\varpi(x,y)$ is continuous.
\end{lemma}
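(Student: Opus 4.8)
The plan is to reduce the statement to the Dominated Convergence Theorem, by means of the classical representation of the fractional Poisson kernel in terms of the Green function~$G_\varpi$ of~$\varpi$. Recall that, for bounded~$C^{1,1}$ domains, one has, for~$x\in\varpi$ and~$y\in\R^n\setminus\overline\varpi$,
\begin{equation*}
P_\varpi(x,y)=\kappa(n,s)\int_\varpi \frac{G_\varpi(x,z)}{|z-y|^{n+2s}}\,dz
\end{equation*}
for a suitable dimensional constant~$\kappa(n,s)>0$ (see e.g.~\cite{MR1687746, MR1438304} and the references therein). For a fixed~$x\in\varpi$, the map~$z\mapsto G_\varpi(x,z)$ belongs to~$L^1(\varpi)$: indeed, $\int_\varpi G_\varpi(x,z)\,dz$ equals the value at~$x$ of the (bounded) solution~$w$ of the fractional torsion problem~$(-\Delta)^s w=1$ in~$\varpi$, $w=0$ in~$\Co\varpi$.

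With this at hand I would argue locally, continuity being a local property. Fix~$y_0\in\R^n\setminus\overline\varpi$ and pick~$\rho>0$ so small that~$\overline{B_\rho(y_0)}\subset\R^n\setminus\overline\varpi$; then~$\delta:=\dist\big(B_\rho(y_0),\varpi\big)>0$, so that~$|z-y|\ge\delta$ for all~$z\in\varpi$ and all~$y\in B_\rho(y_0)$. Consequently, whenever~$y_j\to y_0$ with~$y_j\in B_\rho(y_0)$, the integrands~$z\mapsto G_\varpi(x,z)\,|z-y_j|^{-(n+2s)}$ converge pointwise, as~$j\to+\infty$, to~$z\mapsto G_\varpi(x,z)\,|z-y_0|^{-(n+2s)}$, and they are all dominated by the fixed~$L^1(\varpi)$ function~$\delta^{-(n+2s)}\,G_\varpi(x,\cdot)$. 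Passing to the limit under the integral sign yields~$P_\varpi(x,y_j)\to P_\varpi(x,y_0)$, and since~$y_0$ is arbitrary this proves the lemma.

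There is no serious obstacle here: the argument is essentially routine once the Green-function representation and the integrability~$G_\varpi(x,\cdot)\in L^1(\varpi)$ are available, both of which are classical facts of fractional potential theory for bounded~$C^{1,1}$ domains. As a consistency check, for balls the continuity (in fact real-analyticity) of~$y\mapsto P_{B_R(x_0)}(x,y)$ on~$\Co\overline{B_R(x_0)}$ is immediate from~\eqref{ballpoi}; moreover, the very same dominated-convergence scheme, applied to the difference quotients of~$z\mapsto|z-y|^{-(n+2s)}$, shows that~$y\mapsto P_\varpi(x,y)$ is actually of class~$C^\infty$ away from~$\overline\varpi$, although only continuity is needed in the sequel.
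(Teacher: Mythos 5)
Your proposal is correct and follows essentially the same route as the paper: both use the Green-function representation $P_\varpi(x,y)=\int_\varpi G_\varpi(x,z)\,|y-z|^{-(n+2s)}\,dz$ (up to a constant), bound $|y-z|$ from below uniformly for $y$ near the fixed exterior point, and conclude by the Dominated Convergence Theorem with the dominating function $G_\varpi(x,\cdot)$ times a constant. The only (welcome) addition is that you explicitly justify $G_\varpi(x,\cdot)\in L^1(\varpi)$ via the fractional torsion function, a point the paper takes for granted.
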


\begin{proof} Up to a translation, we suppose that~$0\not\in\overline\varpi$, say~$B_\rho\subset\R^n\setminus\varpi$ for some~$\rho>0$,
and we prove continuity at~$0$. For this,
we take a sequence~$y_j$ converging to~$0$ as~$j\to+\infty$
and we call~$G_\varpi$ the Green function of~$\varpi$.
In this way, see e.g. page~231 of~\cite{MR1687746}, we can write (up to a constant depending on $n$ and~$s$, that we omit
for simplicity)
that for $y\in \Co \overline \varpi$, $x\in \varpi$,
\begin{equation}\label{PP} P_\varpi(x,y)= \int_\varpi \frac{G_\varpi(x,z)}{|y-z|^{n+2s}}\,dz.\end{equation}
Then, for every~$z\in\varpi$,
$$ |y_j-z|\ge |z|-|y_j|\ge\rho-|y_j|\ge\frac\rho2,$$
as long as~$j$ is large enough, and accordingly
$$ \frac{|G_\varpi(x,z)|}{|y_j-z|^{n+2s}}\le
\frac{|G_\varpi(x,z)|}{(\rho/2)^{n+2s}}\in L^1(\varpi).$$
This, \eqref{PP} and the Dominated Convergence Theorem yield that
$$\lim_{j\to+\infty} 
P_\varpi(x,y_j)=\lim_{j\to+\infty} 
\int_\varpi \frac{G_\varpi(x,z)}{|y_j-z|^{n+2s}}\,dz=
\int_\varpi \frac{G_\varpi(x,z)}{|z|^{n+2s}}\,dz
=P_\varpi(x,0),$$
as desired.
\end{proof}

Now we take~$p\in\R^n\setminus\overline\varpi$
and~$\varphi\in C^\infty_0(B_1,[0,1])$,
with~$\varphi$ even and
$$\int_{\R^n}\varphi(x)\,dx=1.$$
For each~$k\in\N$, we define
\begin{equation}\label{APPI2} \varphi_{k,p}(x):=k^n\varphi\left( k(x-p)\right)=
k^n\varphi\left( k(p-x)\right).\end{equation}
We also take~$u_{k,p}$ such that
	\syslab[]{\label{98900101389hd}
	&(-\Delta)^s u_{k,p}= 0 &&\mbox{ in } \varpi,\\
&u_{k,p}
=\varphi_{k,p} && \mbox{ in }\R^n\setminus\varpi. 
}
In view of~\eqref{DEF:POISSONKE},
for every~$x\in\varpi$,
\begin{equation}\label{APPI}
u_{k,p}(x)=\int_{\R^n\setminus\varpi} \varphi_{k,p}(y)\,P_\varpi(x,y)\,dy.
\end{equation}
In the next result, we prove that~$u_{k,p}$ provides a pointwise approximation
of the Poisson Kernel.

\begin{lemma}\label{LE:PDformt}
For every~$x\in\varpi$,
\begin{equation*}\label{7832765490123}\lim_{k\to+\infty}u_{k,p}(x)
=P_\varpi(x,p).\end{equation*}
\end{lemma}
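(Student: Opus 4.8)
The plan is to show that $u_{k,p}(x)\to P_\varpi(x,p)$ by recognizing the right-hand side of~\eqref{APPI} as an average of the continuous map $y\mapsto P_\varpi(x,y)$ against the approximate identity $\varphi_{k,p}$ concentrating at $p$. First I would fix $x\in\varpi$ and note that, by~\eqref{4471}-type reasoning (or simply since $p\in\R^n\setminus\overline\varpi$ is open), there exists $\rho_0>0$ with $\overline{B_{\rho_0}(p)}\subset\R^n\setminus\overline\varpi$; hence for $k$ large the support of $\varphi_{k,p}$, which is $\overline{B_{1/k}(p)}$, lies inside $B_{\rho_0}(p)\subset\R^n\setminus\varpi$, so the integral in~\eqref{APPI} is genuinely over a set on which $P_\varpi(x,\cdot)$ is defined.

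Next I would exploit Lemma~\ref{CONTY}, which tells us that $y\mapsto P_\varpi(x,y)$ is continuous on $\R^n\setminus\overline\varpi$, in particular at $y=p$. Using that $\varphi_{k,p}\ge0$ and $\int_{\R^n}\varphi_{k,p}(y)\,dy=\int_{\R^n}\varphi(z)\,dz=1$ (by the substitution $z=k(y-p)$), I can write
\begin{equation*}
u_{k,p}(x)-P_\varpi(x,p)=\int_{B_{1/k}(p)}\varphi_{k,p}(y)\,\big(P_\varpi(x,y)-P_\varpi(x,p)\big)\,dy,
\end{equation*}
so that
\begin{equation*}
\big|u_{k,p}(x)-P_\varpi(x,p)\big|\le\sup_{y\in B_{1/k}(p)}\big|P_\varpi(x,y)-P_\varpi(x,p)\big|.
\end{equation*}
By the continuity from Lemma~\ref{CONTY}, the right-hand side tends to $0$ as $k\to+\infty$, which gives the claim.

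The argument is essentially routine once Lemma~\ref{CONTY} is in hand; the only point that needs a little care is making sure the competitor function $u_{k,p}$ is well defined and that~\eqref{APPI} applies, i.e. that $\varphi_{k,p}$ is supported outside $\overline\varpi$ so that~\eqref{98900101389hd} has a legitimate exterior datum and~\eqref{DEF:POISSONKE} can be invoked. This is guaranteed for $k$ large depending only on $\dist(p,\overline\varpi)$, and since we are taking $k\to+\infty$ there is no loss. The main (and really only) conceptual obstacle is therefore the continuity of the Poisson kernel in the second variable, which has already been dispatched in Lemma~\ref{CONTY} via the Green function representation~\eqref{PP} and dominated convergence; everything else is the standard mollifier estimate above.
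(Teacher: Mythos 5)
Your argument is correct and is essentially the paper's own proof: both reduce the claim to the continuity of $y\mapsto P_\varpi(x,y)$ at $p$ established in Lemma~\ref{CONTY} and then apply the standard approximation-of-the-identity mechanism to the representation~\eqref{APPI}. The only difference is presentational — the paper cites a textbook theorem on approximate identities, whereas you write out the two-line mollifier estimate explicitly, together with the (correct) remark that for large $k$ the support of $\varphi_{k,p}$ lies outside $\overline\varpi$.
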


\begin{proof} Up to a translation, we can suppose that~$p=0$.
Then, using the symbol ``$*$'' to denote
the convolution for the given~$x\in\varpi$, 
from Lemma~\ref{CONTY}, \eqref{APPI} and the theory of
approximation of the identity (see e.g.
Theorem~9.9
in~\cite{zygmund}),
$$ \lim_{k\to+\infty}u_{k,0}(x)=
\lim_{k\to+\infty}
\int_{\R^n\setminus\varpi} \varphi_{k,0}(y)\,P_\varpi(x,y)\,dy
=\lim_{k\to+\infty} \varphi_{k,0}*P_\varpi(x,\cdot)=P_\varpi(x,0),$$
as desired.
\end{proof}

The approximation of the identity method inside an averaged
formula produces instead the following result:

\begin{lemma}\label{HA:odf45}
Assume that~$|p|>r$. Then,
$$\lim_{k\to+\infty}
\int_{\Co \Omega} 
\frac{	\varphi_{k,p}(y)}{(|y|^2 -r^2)^s |y|^{n} }\,dy=
\frac{	1}{(|p|^2 -r^2)^s |p|^{n} }
.$$
\end{lemma}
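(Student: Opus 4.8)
Set $g(y):=\dfrac{1}{(|y|^2-r^2)^s\,|y|^n}$, which is continuous (indeed smooth and strictly positive) on $\R^n\setminus\overline{B_r}$. Since $|p|>r$, the point $p$ lies in this open set, so $g$ is continuous and bounded on a neighbourhood of $p$. The plan is to recognize the left–hand side as the action of an approximation of the identity on $g$ evaluated at $p$, exactly as in the proof of Lemma~\ref{LE:PDformt}, with the one extra bookkeeping point of checking that the mollifier eventually ``does not see'' either $\overline{B_r}$ or $\overline\Omega$.

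First I would note that, for $k$ large, the support of $\varphi_{k,p}$, namely $\overline{B_{1/k}(p)}$, is contained both in $\Co\overline{B_r}$ (because $|p|>r$, so $B_{1/k}(p)\subset\Co\overline{B_r}$ as soon as $k>(|p|-r)^{-1}$) and in $\Co\Omega$ (this is the situation in which the lemma is applied, cf.~\eqref{4471}, where $\overline{B_{1/k}(p)}\subset\Co\overline\Omega$ for $k$ large). Hence, for such $k$,
\[
\int_{\Co\Omega}\frac{\varphi_{k,p}(y)}{(|y|^2-r^2)^s|y|^n}\,dy=\int_{\R^n}\varphi_{k,p}(y)\,g(y)\,dy,
\]
where on the right we regard the integrand as $0$ on $\overline{B_r}$ (it vanishes there in any case for $k$ large). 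Changing variables via $z:=k(y-p)$ and using that $\varphi$ is supported in $B_1$ with $\int_{\R^n}\varphi=1$, this equals $\displaystyle\int_{B_1}\varphi(z)\,g\!\left(p+\tfrac{z}{k}\right)\,dz$.

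Finally, since $g$ is continuous at $p$, one has $g(p+z/k)\to g(p)$ uniformly for $z\in B_1$ as $k\to+\infty$, so passing to the limit gives $g(p)\int_{B_1}\varphi(z)\,dz=g(p)=\dfrac{1}{(|p|^2-r^2)^s|p|^n}$, as claimed. (Alternatively, one may invoke the approximation-of-the-identity result cited in the proof of Lemma~\ref{LE:PDformt}, e.g.\ Theorem~9.9 in~\cite{zygmund}, applied to a bounded continuous function agreeing with $g$ near $p$.) The argument is entirely routine; the only step needing (minor) care is the verification that for $k$ large the mollifier $\varphi_{k,p}$ is supported away from $\overline{B_r}$ and from $\overline\Omega$, so that the domain-restricted integral coincides with the full-space one and $g$ is smooth on the support.
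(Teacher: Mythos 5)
Your proof is correct and follows essentially the same route as the paper's: both recognize the integral as a mollification of a function continuous at $p$ and pass to the limit via the approximation of the identity (the paper cites Theorem~9.9 of~\cite{zygmund} applied to $\Psi=\chi_{{\mathcal{C}}\Omega}\,g$, while you do the change of variables and uniform-continuity step by hand). If anything, your version is slightly more careful on the one delicate point, namely that for large $k$ the support of $\varphi_{k,p}$ avoids both $\overline{B_r}$ and $\overline\Omega$, which the paper handles implicitly by asserting continuity of $\Psi$ at $p$.
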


\begin{proof} We define
$$ \Psi(x):=
\frac{	\chi_{{\mathcal{C}}\Omega}(x)}{(|x|^2 -r^2)^s |x|^{n} }$$
and we remark that~$\Psi$ is continuous at~$p$
(since~$|p|>r$).
Consequently, by the theory of
approximation of the identity (see e.g.
Theorem~9.9
in~\cite{zygmund}),
\begin{eqnarray*}&&
\lim_{k\to+\infty}
\int_{\Co \Omega} 
\frac{	\varphi_{k,p}(y)}{(|y|^2 -r^2)^s |y|^{n} }\,dy=
\lim_{k\to+\infty}
\int_{\R^n} 
k^n\varphi\left( k(p-y)\right)\,\Psi(y)\,dy\\&&\qquad=
\lim_{k\to+\infty}
\int_{\R^n} 
k^n\varphi(kz)\,\Psi(p-z)\,dy=\lim_{k\to+\infty}(\varphi_{k,0}*\Psi)(p)=\Psi(p),
\end{eqnarray*}
which yields the desired result.
\end{proof}

The next result recalls the monotonicity properties
of the Poisson kernel with respect to set inclusion.

\begin{lemma}\label{DENRP}
Let~$\varpi_1\subseteq\varpi_2$.
Then,
\begin{equation}\label{HAJS:1-09-00-01}
P_{\varpi_1}(x,y)\le P_{\varpi_2}(x,y)\qquad{\mbox{ for all $x\in\varpi_1$ and~$y\in\Co \overline\varpi_2$.}}
\end{equation}
Moreover, given~$y\in\Co \overline\varpi_2$,
the function~$\R^n\ni x\mapsto v(x):=P_{\varpi_2}(x,y)- P_{\varpi_1}(x,y)$ satisfies
\begin{equation}\label{HAJS:1-09-00-02}
\begin{cases}
(-\Delta)^s v(x)=0 & {\mbox{ for every }}x\in\varpi_1,\\
v(x)=0 & {\mbox{ for every }}x\in\Co \overline\varpi_2,\\
v(x)=P_{\varpi_2}(x,y)& {\mbox{ for every }}x\in\varpi_2\setminus\varpi_1
.\end{cases}\end{equation}
\end{lemma}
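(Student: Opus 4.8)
The plan is to reduce both parts of the statement to the single identity
\[
P_{\varpi_2}(x,y)-P_{\varpi_1}(x,y)=\int_{\varpi_2\setminus\varpi_1}P_{\varpi_2}(z,y)\,P_{\varpi_1}(x,z)\,dz,\qquad x\in\varpi_1,\;\; y\in\Co\overline{\varpi_2}.
\]
Its right-hand side is an integral of nonnegative quantities, so \eqref{HAJS:1-09-00-01} follows at once; and, reading the right-hand side through \eqref{DEF:POISSONKE}, the identity says that the function equal to $P_{\varpi_2}(\cdot,y)-P_{\varpi_1}(\cdot,y)$ on $\varpi_1$, to $P_{\varpi_2}(\cdot,y)$ on $\varpi_2\setminus\varpi_1$, and to $0$ on $\Co\overline{\varpi_2}$, is precisely the $s$-harmonic function in $\varpi_1$ having those exterior values, that is, $v$ satisfies \eqref{HAJS:1-09-00-02}.

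To prove the identity I would mollify the (exterior) Dirac datum at $y$, in the spirit of Appendix~\ref{POSKE}. Fix $y\in\Co\overline{\varpi_2}$ and $\rho>0$ with $B_{2\rho}(y)\subset\Co\overline{\varpi_2}$; for $k>1/\rho$ take $\varphi_{k,y}$ as in \eqref{APPI2}, so that $\supp\varphi_{k,y}\subset B_\rho(y)\subset\Co\overline{\varpi_2}\subseteq\Co\overline{\varpi_1}$. For $i=1,2$ let $u_{k,y}^{(i)}$ solve $(-\Delta)^su_{k,y}^{(i)}=0$ in $\varpi_i$ with $u_{k,y}^{(i)}=\varphi_{k,y}$ outside $\varpi_i$, and set $v_k:=u_{k,y}^{(2)}-u_{k,y}^{(1)}$. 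Since $\varpi_1\subseteq\varpi_2$, the function $v_k$ is $s$-harmonic in $\varpi_1$; on $\Co\varpi_2$ both terms equal $\varphi_{k,y}$, so $v_k=0$ there; and on $\varpi_2\setminus\varpi_1$ one has $u_{k,y}^{(1)}=\varphi_{k,y}=0$ (as $\varphi_{k,y}$ vanishes on $\varpi_2$), so $v_k=u_{k,y}^{(2)}$ there. Hence \eqref{DEF:POISSONKE} applied to $v_k$ in $\varpi_1$ gives
\[
v_k(x)=\int_{\varpi_2\setminus\varpi_1}u_{k,y}^{(2)}(z)\,P_{\varpi_1}(x,z)\,dz,\qquad x\in\varpi_1,
\]
while Lemma~\ref{LE:PDformt} gives $u_{k,y}^{(i)}(x)\to P_{\varpi_i}(x,y)$ for $x\in\varpi_i$, so that $v_k(x)\to P_{\varpi_2}(x,y)-P_{\varpi_1}(x,y)$ for every $x\in\varpi_1$.

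It then remains to pass to the limit inside the last integral, and the crucial point is a bound on $u_{k,y}^{(2)}$ that is uniform in $k$ on $\varpi_2\setminus\varpi_1$. Since $\supp\varphi_{k,y}$ lies in the fixed set $B_\rho(y)$, which is bounded away from $\overline{\varpi_2}$, the explicit boundary bounds for the Poisson kernel (by comparison with a ball, as in the proof of Lemma~\ref{0.1lemma} together with \eqref{ballpoi}, or via the estimates of \cite{MR1687746}) give $P_{\varpi_2}(z,w)\le C$ for all $z\in\varpi_2$ and $w\in B_\rho(y)$, whence $u_{k,y}^{(2)}(z)=\int_{B_\rho(y)}\varphi_{k,y}(w)\,P_{\varpi_2}(z,w)\,dw\le C$ on $\varpi_2\setminus\varpi_1$, uniformly in $k$. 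As $\int_{\Co\varpi_1}P_{\varpi_1}(x,z)\,dz=1$ (take the exterior datum $\equiv1$ in \eqref{DEF:POISSONKE}), this constant is an integrable majorant, so the Dominated Convergence Theorem, together with $u_{k,y}^{(2)}(z)\to P_{\varpi_2}(z,y)$, delivers the displayed identity, and with it the lemma. The main obstacle is exactly this uniform control of $u_{k,y}^{(2)}$ near $\partial\varpi_2$; the rest is the approximation-of-the-identity machinery already assembled in Appendix~\ref{POSKE}. If one prefers to avoid invoking an upper Poisson bound, one can first prove \eqref{HAJS:1-09-00-01} on its own — $v_k\ge0$ in $\varpi_1$ by the maximum principle, since its exterior datum relative to $\varpi_1$ is nonnegative, and then let $k\to\infty$ — and use that inequality, applied to $\varpi_2$ against a large exterior ball, to produce the required majorant.
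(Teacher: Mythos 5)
Your proposal is correct and follows essentially the same route as the paper: the paper simply subtracts the two exterior-value problems (with Dirac datum $\delta_y$) to obtain \eqref{HAJS:1-09-00-02}, and then reads off \eqref{HAJS:1-09-00-01} from the Poisson representation $v(x)=\int_{\Co\varpi_1}v(z)\,P_{\varpi_1}(x,z)\,dz\ge0$, which is exactly your displayed identity. Your mollification of the Dirac datum via $\varphi_{k,y}$, the uniform bound on $u^{(2)}_{k,y}$, and the dominated convergence step just make rigorous what the paper carries out formally, using the same Appendix~\ref{POSKE} machinery.
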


\begin{proof} Let~$y\in\left(\Co \overline\varpi_2\right)\subseteq\left(\Co \overline\varpi_1\right)$.
Since, for all~$i\in\{1,2\}$,
$$ \begin{cases}
(-\Delta)^s P_{\varpi_i}(x,y)=0 &{\mbox{ if }}x\in{\varpi_i},\\
P_{\varpi_i}(x,y)=\delta_y(x)&{\mbox{ if }}x\in\Co{\varpi_i},
\end{cases}$$
the claim in~\eqref{HAJS:1-09-00-02} follows by subtraction.

{F}rom~\eqref{HAJS:1-09-00-02}, it also follows that~$v\ge0$ in~$\Co\varpi_1$,
and thus, for every~$x\in\varpi_1$,
$$ v(x)=\int_{\Co\varpi_1} v(y)\,P_{\varpi_1}(x,y)\,dy\ge0,$$
that gives~\eqref{HAJS:1-09-00-01}.
\end{proof}

\end{appendix}

\end{document}